\documentclass[a4paper,11pt]{article}

\usepackage[defaultlines=5,all]{nowidow}		
\usepackage{indentfirst}
\usepackage[T1]{fontenc}
\usepackage[utf8]{inputenc}
\usepackage{authblk}
\usepackage{diagbox}
\usepackage{amsthm,amsmath}
\usepackage{tikz}
\usepackage{paralist}
\usepackage[shortlabels]{enumitem}
\usepackage{subcaption}
\usepackage{hyperref}		
\usepackage{stfloats}
\usepackage{verbatim}
\hypersetup{
	colorlinks=true,
	citecolor = blue,
	linkcolor=blue,
	filecolor=magenta,
	urlcolor=cyan,
}

\newtheorem{theorem}{Theorem}
\newtheorem{lemma}{Lemma}
\newtheorem{claim}{Claim}

\parskip    2.5mm							

\title{{\bf Critical ($P_5$,bull)-free graphs}}

\author[,a,b]{Shenwei Huang\thanks{Email: shenweihuang@nankai.edu.cn. Supported by Natural Science Foundation of Tianjin (20JCYBJC01190), and the Fundamental Research Funds for the Central Universities, Nankai University.}}
\author[a]{Jiawei Li}
\author[a]{Wen Xia}
\affil[a]{College of Computer Science, Nankai University, Tianjin 300071, China}
\affil[b]{Tianjin Key Laboratory of Network and Data Security Technology, Nankai University, Tianjin 300071, China}

\date{}

\begin{document}
	
	\maketitle
	
	\begin{abstract}
		Given two graphs $H_1$ and $H_2$, a graph is $(H_1,H_2)$-free if it contains no induced subgraph isomorphic to $H_1$ or $H_2$. Let $P_t$ and $C_t$ be the path and the cycle on $t$ vertices, respectively. A bull is the graph obtained from a triangle with two disjoint pendant edges. In this paper, we show that there are finitely many 5-vertex-critical ($P_5$,bull)-free graphs.
		
		{\bf Keywords.} coloring; critical graphs; forbidden induced subgraphs; strong perfect graph theorem; polynomial-time algorithms.
	\end{abstract}
	
	\section{Introduction}
	All graphs in this paper are finite and simple. We say that a graph $G$ {\em contains} a graph $H$ if $ H $ is isomorphic to an induced subgraph of $G$. A graph $G$ is {\em H-free} if it does not contain $H$. For a family of graphs $\mathcal{H}$, $G$ is {\em $\mathcal{H}$-free} if $G$ is $H$-free for every $H\in \mathcal{H}$. When $\mathcal{H}$ consists of two graphs, we write $(H_1,H_2)$-free instead of $\{H_1,H_2\}$-free.
	
	A $k$-{\em coloring} of a graph $G$ is a function $\phi:V(G)\rightarrow\{1,...,k\}$ such that $\phi(u)\neq\phi(v)$ whenever $u$ and $v$ are adjacent in $G$. Equivalently, a $k$-coloring of $G$ is a partition of $V(G)$ into $k$ independent sets. We call a graph $k$-{\em colorable} if it admits a $k$-coloring. The {\em chromatic number} of $G$, denoted by $\chi(G)$, is the minimum number $k$ for which $G$ is $k$-colorable. The {\em clique number} of $G$, denoted by $\omega(G)$, is the size of a largest clique in $G$.
	
	A graph $G$ is said to be $k$-{\em chromatic} if $\chi(G)=k$. We say that $G$ is {\em critical} if $\chi(H)<\chi(G)$ for every proper subgraph $H$ of $G$. A $k$-{\em critical} graph is one that is $k$-chromatic and critical. An easy consequence of the definition is that every critical graph is connected. Critical graphs were first investigated by Dirac \cite{Di51,Di52,Di52i} in 1951, and then by Lattanzio and Jensen \cite{L02,J02} among others, and by Goedgebeur \cite{GS18} in recent years.
	
	Vertex-criticality is a weaker notion. Suppose that $G$ is a graph. Then $G$ is said to be $k$-{\em vertex-critical} if $G$ has chromatic number $k$ and removing any vertex from $G$ results in a graph that is $(k-1)$-colorable. For a set $\mathcal{H}$ of graphs, we say that $ G $ is {\em k-vertex-critical $\mathcal{H}$-free} if it is $k$-vertex-critical and $\mathcal{H}$-free. The following problem arouses our interest.
	
	{\noindent} \textbf{The finiteness problem.} Given a set $\mathcal{H}$ of graphs and an integer $k\ge 1$, are there only finitely many $ k $-vertex-critical $\mathcal{H}$-free graphs? 
	
	This problem is meaningful because the finiteness of the set has a fundamental algorithmic implication.
	
	\begin{theorem}[Folklore]\label{Folklore}
		If the set of all $k$-vertex-critical $\mathcal{H}$-free graphs is finite, then there is a polynomial-time algorithm to determine whether an $\mathcal{H}$-free graph is $(k-1)$-colorable.  \qed
	\end{theorem}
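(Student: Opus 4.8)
The plan is to reduce $(k-1)$-colorability testing to a finite collection of induced-subgraph searches. The central observation I would establish first is the following characterization: an $\mathcal{H}$-free graph $G$ fails to be $(k-1)$-colorable if and only if it contains, as an induced subgraph, one of the finitely many $k$-vertex-critical $\mathcal{H}$-free graphs.

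For the nontrivial direction, suppose $G$ is $\mathcal{H}$-free with $\chi(G)\ge k$. I would pass to an induced subgraph $G'$ of $G$ that is minimal, with respect to vertex deletion, subject to $\chi(G')\ge k$. Then for every $v\in V(G')$ we have $\chi(G'-v)\le k-1$. Since deleting a single vertex drops the chromatic number by at most one, $\chi(G')\le \chi(G'-v)+1\le k$, so in fact $\chi(G')=k$ and $\chi(G'-v)=k-1$ for all $v$; that is, $G'$ is $k$-vertex-critical. Being an induced subgraph of the $\mathcal{H}$-free graph $G$, it is itself $\mathcal{H}$-free, hence lies in our finite list. The converse is immediate: any $k$-vertex-critical induced subgraph $G'$ forces $\chi(G)\ge\chi(G')=k$.

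With this equivalence in hand, the algorithm is straightforward. Let $\mathcal{C}=\{G_1,\dots,G_m\}$ be the finite, hence fixed, set of all $k$-vertex-critical $\mathcal{H}$-free graphs, and put $c=\max_i |V(G_i)|$. Given an input $\mathcal{H}$-free graph $G$ on $n$ vertices, test for each $G_i$ whether $G$ has an induced copy by examining all vertex subsets of size $|V(G_i)|$; this costs $O(n^{c})$ time per graph, and since $m$ and $c$ are constants independent of $G$, the whole procedure runs in polynomial time. One outputs that $G$ is $(k-1)$-colorable exactly when no $G_i$ is found.

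The only real content is the minimality argument together with the one-vertex chromatic bound; there is no genuine obstacle, which is why the result is folklore. The one point I would take care to state explicitly is that the constants $m$ and $c$ depend on $k$ and $\mathcal{H}$ but not on the input $G$, so that the brute-force subgraph search is genuinely polynomial in $n$.
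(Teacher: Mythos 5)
Your proof is correct, and it is exactly the standard argument this folklore theorem rests on: the paper itself states the result with \qed and omits any proof, so there is nothing it does differently. Your minimality argument (passing to a vertex-deletion-minimal induced subgraph with $\chi\ge k$, using that deleting a vertex drops $\chi$ by at most one, and noting that $\mathcal{H}$-freeness is hereditary) together with the constant-size brute-force search in $O(m\cdot n^{c})$ time is complete and sound.
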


	Let $K_n$ be the complete graph on $n$ vertices. Let $ P_t $ and $ C_t $ denote the path and the cycle on $t$ vertices, respectively. The {\em complement} of $G$ is denoted by $\overline{G}$. For $s,r\ge 1$, let $K_{r,s}$ be the complete bipartite graph with one part of size $r$ and the other part of size $s$. A class of graphs that has been extensively studied recently is the class of $P_t$-free graphs. In \cite{BHS09}, it was shown that there are finite many 4-vertex-critical $P_5$-free graphs. This result was later generalized to $P_6$-free graphs \cite{CGSZ16}. In the same paper, an infinite family of 4-vertex-critical $P_7$-free graphs was constructed. Moreover, for every $k\ge 5$, an infinite family of $k$-vertex-critical $P_5$-free graphs was constructed in \cite{HMRSV15}. This implies that the finiteness of $k$-vertex-critical $P_t$-free graphs for $t\ge 1$ and $k\ge 4$ has been completely solved	 by researchers. We summarize the results in the following table.
	
	\begin{table}[!ht]
		\centering 
		\caption{The finiteness of $k$-vertex-critical $P_t$-free graphs.}
		\renewcommand\arraystretch{1.5}  
		\setlength{\tabcolsep}{4mm}{}
		\begin{tabular}{|c|p{1.6cm}<{\centering}|p{1.9cm}<{\centering}|p{1.6cm}<{\centering}|p{1.7cm}<{\centering}|}
		\hline
		\diagbox{$k$}{$t$} & $\le4$ & 5 & 6 & $\ge7$\\
		\hline
		4 & finite & finite \cite{BHS09}& finite \cite{CGSZ16}& infinite \cite{CGSZ16}\\
		\hline
		$\ge 5$ & finite & infinite \cite{HMRSV15}& infinite & infinite \\  
		\hline 
	\end{tabular}
	\end{table}

	Because there are infinitely many 5-vertex-critical $P_5$-free graphs, many researchers have investigated the finiteness problem of $k$-vertex-critical $(P_5,H)$-free graphs. Our research is mainly motivated by the following dichotomy result.
	
	\begin{theorem}[\cite{CGHS21}]
		Let $H$ be a graph of order 4 and $k\ge 5$ be a fixed integer. Then there are infinitely many k-vertex-critical $(P_5,H)$-free graphs if and only if $H$ is $2P_2$ or $P_1+K_3$.
	\end{theorem}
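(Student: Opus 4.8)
The plan is to treat the two directions of the equivalence separately. For the finiteness (``only if'') direction I would list the eleven graphs on four vertices, set aside $2P_2$ and $P_1+K_3$, and prove that there are finitely many $k$-vertex-critical $(P_5,H)$-free graphs for each of the remaining nine: $4K_1$, $K_2+2K_1$, $P_3+K_1$, $P_4$, $K_{1,3}$, the paw, $C_4$, the diamond, and $K_4$. Several of these collapse at once. Since $P_4$ and $P_3+K_1$ are themselves induced subgraphs of $P_5$, forbidding $P_5$ is redundant, so $(P_5,P_4)$-free graphs are exactly cographs; cographs are perfect, and a $k$-vertex-critical perfect graph must be $K_k$ (if $G\supsetneq K_k$, deleting a vertex outside the clique leaves a $k$-chromatic graph). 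The paw is handled by Olariu's theorem that a connected paw-free graph is complete multipartite or triangle-free: a $k$-vertex-critical complete multipartite graph is $K_k$, and a triangle-free $P_5$-free graph is $3$-colorable, hence cannot be $k$-chromatic for $k\ge5$. Thus the paw, $P_4$, and (after a short direct analysis) $P_3+K_1$ each yield only $K_k$.

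For the remaining graphs I would use a common backbone. Let $G$ be $k$-vertex-critical and $(P_5,H)$-free; then $G$ is connected, so by the Bacs\'o--Tuza theorem it has a dominating clique or a dominating induced $C_5$. Since $G\neq K_k$ forces $\omega(G)\le k-1$, a dominating clique has bounded size, and a dominating $C_5$ has five vertices; fix this dominating set $D$ and partition $V(G)\setminus D$ into classes by neighborhood within $D$, giving at most $2^{|D|}\le 2^k$ classes. Everything then reduces to bounding the number of vertices in a single class. Here I would combine $H$-freeness with the criticality constraints (every vertex has degree at least $k-1$, and deleting any vertex must free a color): forbidding $H$ restricts the local structure of a class, while the $(k-1)$-colorings of vertex-deleted subgraphs constrain how a large homogeneous class could be recolored. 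Concretely, $K_4$ bounds $\omega$, the diamond forces every neighborhood to be a disjoint union of cliques, the claw bounds the independence number of every neighborhood, and $C_4$-freeness limits common neighborhoods of non-adjacent pairs; in each case I would convert the local restriction into an absolute bound on $|V(G)|$.

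For the constructions (``if'' direction) I would exhibit, for every fixed $k\ge5$, an explicit infinite family. For $H=2P_2$ I would first record the simplification that every $2P_2$-free graph is automatically $P_5$-free, since an induced $P_5$ contains an induced $2P_2$ on its two end-edges; it therefore suffices to produce infinitely many $k$-vertex-critical $2P_2$-free graphs. Such a family cannot be a join of two smaller critical graphs (at a fixed $\chi$ these are limited) nor a vertex-blow-up of $C_5$ (whose chromatic number caps the number of vertices), so the members must be prime and imperfect; I would present an explicit growing construction and verify its three defining properties: $2P_2$-freeness, $\chi=k$, and vertex-criticality. For $H=P_1+K_3$ I would give a parallel family, the genuinely new constraint now being $(P_1+K_3)$-freeness, which says every triangle is dominating; the gadget must keep all triangles close to every vertex (as complete multipartite graphs and odd antiholes already do), and again I would verify the same three properties.

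The hard part will be the per-class bounding in the finiteness direction, and this is precisely where the dichotomy lives. The whole difficulty is controlling a homogeneous set of vertices sharing a neighborhood in the dominating set: when $H$-freeness prevents such a set from being simultaneously large and flexible enough to recolor, the graph is bounded, whereas for $H=2P_2$ and $H=P_1+K_3$ exactly these sets can be blown up arbitrarily while preserving criticality, which is what powers the infinite families. I expect $H=4K_1$ and $H=K_2+2K_1$ (where $H$ has a large independent set) and $H=K_4$ (where there is no local clique restriction) to be the most delicate, since there the forbidden subgraph gives the weakest local control and the bound on a class must instead be extracted from a more global criticality argument.
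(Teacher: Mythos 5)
First, note that the paper you are comparing against does not prove this statement at all: it is quoted verbatim as Theorem 2 with the citation [\,CGHS21\,], so there is no in-paper proof to match and your attempt must be judged on its own merits. Your skeleton is reasonable (enumerate the eleven order-4 graphs, handle $P_4$ via perfection and the paw via Olariu's theorem plus 3-colorability of $(P_5,K_3)$-free graphs -- both of those steps are correct), but it contains a concretely false step: the claim that $P_3+K_1$ yields only $K_k$. The complement of $P_3+K_1$ is the paw and $\overline{P_5}$ contains a triangle, so for every $k\ge 5$ the odd antihole $\overline{C_{2k-1}}$ is $(P_5,P_3+K_1)$-free (since $C_{2k-1}$ is triangle-free it contains neither a paw nor $\overline{P_5}$), and $\overline{C_{2k-1}}$ is $k$-vertex-critical and is not $K_k$. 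So your ``short direct analysis'' for that case cannot succeed, and a genuine finiteness argument is needed there.

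Beyond that, the two load-bearing parts of the theorem are asserted rather than proved. For the ``if'' direction you never exhibit the infinite families: your structural remarks (not a join, not a $C_5$-blow-up) constrain what a construction must avoid but produce nothing, and the one natural candidate class, odd antiholes, gives only a single graph per value of $k$; the actual constructions in [CGHS21] are nontrivial gadgets whose $2P_2$-freeness (resp.\ $(P_1+K_3)$-freeness), $k$-chromaticity and vertex-criticality all require verification. For the ``only if'' direction, the dominating-clique-or-$C_5$ plus neighborhood-class setup is the standard opening (it is also how the present paper attacks the bull case), but the per-class bounds are the entire content of the proof and you leave them as intentions; note in particular that classes defined by their neighborhood in the dominating set are generally not homogeneous in $G$, so pigeonhole/recoloring control must be rebuilt separately for $K_2+2K_1$, the claw, $C_4$, the diamond and $K_4$, which is where essentially all the work in [CGHS21] lies. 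A smaller point cuts the other way: $4K_1$, which you flag as among the most delicate, is in fact immediate, since $4K_1$-freeness gives $\alpha(G)\le 3$ and hence $|V(G)|\le \alpha(G)\chi(G)\le 3k$ for any $k$-chromatic such graph.
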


	This theorem completely solves the finiteness problem of $k$-vertex-critical $(P_5,H)$-free graphs for graphs of order 4. In \cite{CGHS21}, the authors also posed the natural question of which five-vertex graphs $H$ lead to finitely many $k$-vertex-critical $(P_5,H)$-free graphs. It is known that there are exactly 13 5-vertex-critical $(P_5,C_5)$-free graphs \cite{HMRSV15}, and that there are finitely many 5-vertex-critical ($P_5$,banner)-free graphs \cite{CHLS19,HLS19}, and finitely many $k$-vertex-critical $(P_5,\overline{P_5})$-free graphs for every fixed $k$ \cite{DHHMMP17}. In \cite{CGS},  Cai, Goedgebeur and Huang show that there are finitely many $k$-vertex-critical ($P_5$,gem)-free graphs and finitely many $k$-vertex-critical ($P_5,\overline{P_3+P_2}$)-free graphs. Hell and Huang proved that there are finitely many $k$-vertex-critical $(P_6,C_4)$-free graphs \cite{HH17}. This was later generalized to $(P_5,K_{r,s})$-free graphs in the context of $H$-coloring \cite{KP17}. This gives an affirmative answer for $H=K_{2,3}$.
	
	\noindent {\bf Our contributions.} We continue to study the finiteness of vertex-critical $(P_5,H)$-free graphs when $H$ has order 5. The {\em bull} graph (see \autoref{bull}) is the graph obtained from a triangle with two disjoint pendant edges. In this paper, we prove that there are only finitely many 5-vertex-critical ($P_5$,bull)-free graphs.
	
		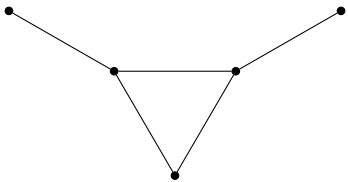
\begin{figure}[h]
		\centering
		\begin{tikzpicture}[scale=0.8]
			\tikzstyle{vertex}=[draw, circle, fill=black!100, minimum width=1pt,inner sep=1pt]
			
			\node[vertex](v1) at (-1,1.73) {};
			\node[vertex](v2) at (0,0) {};
			\node[vertex](v3) at (1,1.73) {};
			\node[vertex](v4) at (-2.73,2.73){};
			\node[vertex](v5) at (2.73,2.73) {};
			\draw (v1)--(v2)--(v3)--(v1) (v1)--(v4) (v3)--(v5);	
		\end{tikzpicture}
		\caption{The bull graph.}
		\label{bull}
	\end{figure}
	
	To prove the result on bull-free graphs, we performed a careful structural analysis combined with the pigeonhole principle based on the properties of 5-vertex-critical graphs.
	
	The remainder of the paper is organized as follows. We present some preliminaries in Section \ref{Preliminarlies} and give structural properties around an induced $C_5$ in a ($P_5$,bull)-free graph in Section \ref{structure}. We then show that there are finitely many 5-vertex-critical ($P_5$,bull)-free graphs in Section \ref{bull-free}.
	
	\section{Preliminaries}\label{Preliminarlies}
	
	For general graph theory notation we follow \cite{BM08}. For $k\ge 4$, an induced  cycle of length $k$ is called a {\em $k$-hole}. A $k$-hole is an {\em odd hole} (respectively {\em even hole}) if $k$ is odd (respectively even). A {\em $k$-antihole} is the complement of a $k$-hole. Odd and even antiholes are defined analogously.
	
	Let $G=(V,E)$ be a graph. For $S\subseteq V$ and $u\in V\setminus S$, let $d(u,S)={min}_{v\in S}d(u,v)$, where $d(u,v)$ denotes the length of the shortest path from $u$ to $v$. If $uv\in E$, we say that $u$ and $v$ are {\em neighbors} or {\em adjacent}, otherwise $u$ and $v$ are {\em nonneighbors} or  {\em nonadjacent}. The {\em neighborhood} of a vertex $v$, denoted by $N_G(v)$, is the set of neighbors of $v$. For a set $X\subseteq V$, let $N_G(X)=\cup_{v\in X}N_G(v)\setminus X$. We shall omit the subscript whenever the context is clear. For $x\in V$ and $S\subseteq V$, we denote by $N_S(x)$ the set of neighbors of $x$ that are in $S$, i.e., $N_S(x)=N_G(x)\cap S$. For two sets $X,S\subseteq V(G)$, let $N_S(X)=\cup_{v\in X}N_S(v)\setminus X$. For $X,Y\subseteq V$, we say that $X$ is {\em complete} (resp. {\em anticomplete}) to $Y$ if every vertex in $X$ is adjacent (resp. nonadjacent) to every vertex in $Y$. If $X=\{x\}$, we write ``$x$ is complete (resp. anticomplete) to $Y$'' instead of ``$\{x\}$ is complete (resp. anticomplete) to $Y$''. If a vertex $v$ is neither complete nor anticomplete to a set $S$, we say that $v$ is {\em mixed} on $S$. For a vertex $v\in V$ and an edge $xy\in E$, if $v$ is mixed on $\{x,y\}$, we say that $v$ is {\em mixed} on $xy$. For a set $H\subseteq V$, if no vertex in $V-H$ is mixed on $H$, we say that $H$ is a {\em homogeneous set}, otherwise $H$ is a {\em nonhomogeneous set}. A vertex subset $S\subseteq V$ is {\em independent} if no two vertices in $S$ are adjacent. A {\em clique} is the complement of an independent set. Two nonadjacent vertices $u$ and $v$ are said to be {\em comparable} if $N(v)\subseteq N(u)$ or $N(u)\subseteq N(v)$. A vertex subset $K\subseteq V$ is a {\em clique cutset} if $G-K$ has more connected components than $G$ and $K$ is a clique. For an induced subgraph $A$ of $G$, we write $G-A$ instead of $G-V(A)$. For $S\subseteq V$, the subgraph \emph{induced} by $S$ is denoted by $G[S]$. For $S\subseteq V$ and an induced subgraph $A$ of $G$, we may write $S$ instead of $G[S]$ and $A$ instead of $V(A)$ for the convenience of writing whenever the context is clear.
	
	We proceed with a few useful results that will be needed later. The first one is well-known in the study of $k$-vertex-critical graphs.
	
	\begin{lemma}[Folklore]\label{lem:xy}
		A $k$-vertex-critical graph contains no clique cutsets.
	\end{lemma}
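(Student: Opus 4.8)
The plan is to argue by contradiction. Suppose $G$ is $k$-vertex-critical and, contrary to the claim, possesses a clique cutset $K$. Since every critical graph is connected, $G$ itself is connected, so deleting $K$ creates at least two components; I would collect these into two nonempty, mutually anticomplete sets $A$ and $B$ with $V(G) = A \cup K \cup B$. The point of $K$ being a cutset is that every edge of $G$ lies either inside $A \cup K$ or inside $B \cup K$, because there are no edges between $A$ and $B$.

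Next I would produce cheap $(k-1)$-colorings of the two sides. Fixing any vertex $b \in B$, the graph $G[A \cup K]$ is an induced subgraph of $G - b$, which is $(k-1)$-colorable by vertex-criticality; hence $\chi(G[A \cup K]) \le k-1$, and symmetrically $\chi(G[B \cup K]) \le k-1$. Call these colorings $c_1$ and $c_2$. Because $K$ is a clique contained in a $(k-1)$-colorable graph, we have $|K| \le k-1$, and both $c_1$ and $c_2$ assign $|K|$ pairwise distinct colors to the vertices of $K$.

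The key step is to glue $c_1$ and $c_2$ along $K$. Since $c_1$ and $c_2$ each use exactly $|K|$ distinct colors on $K$ out of the same palette $\{1,\dots,k-1\}$, I can choose a permutation $\pi$ of the palette so that $\pi \circ c_2$ agrees with $c_1$ on every vertex of $K$; recoloring one side by a color permutation preserves properness. Defining $c$ to equal $c_1$ on $A \cup K$ and $\pi \circ c_2$ on $B \cup K$ then gives a well-defined map, as the two definitions coincide on the overlap $K$. Properness of $c$ follows edge by edge: every edge sits inside one of the two sides and is handled by the corresponding coloring. This yields a $(k-1)$-coloring of $G$, contradicting $\chi(G) = k$.

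I expect the only delicate point to be the gluing: one must check that the clique hypothesis on $K$ is exactly what forces the two local colorings to agree on $K$ after a palette permutation. Were $K$ merely a cutset rather than a clique, the colors on $K$ could fall into incompatible patterns and no single permutation would reconcile them. The clique condition guarantees that $K$ is rainbow-colored on both sides, reducing the matching to a bijection between equal-size color sets, which always extends to a permutation of the full palette.
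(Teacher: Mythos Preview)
Your argument is correct and is the standard proof of this folklore fact. The paper does not supply a proof at all---it simply cites the result as folklore---so there is no approach to compare against. One minor terminological point: the paper distinguishes \emph{critical} from \emph{vertex-critical}, and the lemma concerns the latter, so your line ``since every critical graph is connected'' would read more precisely as ``since every $k$-vertex-critical graph is connected''; the claim is still true for the same reason (a vertex in a component of non-maximal chromatic number could be deleted without lowering $\chi$), and the rest of your gluing argument goes through unchanged.
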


	Another folklore property of vertex-critical graphs is that such graphs contain no comparable vertices. In \cite{CGHS21}, a generalization of this property was presented.
	
	\begin{lemma}[\cite{CGHS21}]\label{lem:XY}
		Let $G$ be a $k$-vertex-critical graph. Then $G$ has no two nonempty disjoint subsets $X$ and $Y$ of $V(G)$ that satisfy all the following conditions.
		\begin{itemize}
			\item $X$ and $Y$ are anticomplete to each other.
			\item $\chi(G[X])\le\chi(G[Y])$.
			\item Y is complete to $N(X)$.
		\end{itemize}
	\end{lemma}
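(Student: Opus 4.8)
The plan is to argue by contradiction, generalizing the standard fact that a vertex-critical graph has no two comparable vertices. Suppose $G$ is $k$-vertex-critical and that nonempty disjoint sets $X,Y$ satisfy the three listed conditions. I would write $t=\chi(G[X])$, so that $t\le\chi(G[Y])$. Since $X$ is nonempty, I would fix a vertex $v\in X$; by vertex-criticality $G-v$ is $(k-1)$-colorable, and hence so is its induced subgraph $G-X$. Let $\phi$ be a $(k-1)$-coloring of $G-X$. The goal is then to extend $\phi$ to a proper $(k-1)$-coloring of all of $G$ by recoloring the vertices of $X$ with colors that $\phi$ already spends on $Y$; exhibiting such a coloring contradicts $\chi(G)=k$.

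Next I would record two facts about the colors $\phi$ assigns near $X$. First, since $\phi$ restricts to a proper coloring of $G[Y]$, it uses at least $\chi(G[Y])\ge t$ distinct colors on $Y$; let $C_Y=\phi(Y)$, so that $|C_Y|\ge t$. Second, and this is the heart of the matter, I claim the colors appearing on $N(X)$ are disjoint from $C_Y$. Indeed, every vertex $w\in N(X)$ is adjacent to every vertex of $Y$ because $Y$ is complete to $N(X)$; here one uses that $N(X)\cap Y=\emptyset$, which follows because the anticompleteness of $X$ and $Y$ forbids any $y\in Y$ from having a neighbor in $X$. Hence $\phi(w)$ cannot be reused on any vertex of $Y$, i.e. $\phi(w)\notin C_Y$, and therefore $\phi(N(X))\cap C_Y=\emptyset$.

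To finish, I would color $X$. Choose a proper $t$-coloring of $G[X]$ with color classes $I_1,\dots,I_t$, pick $t$ distinct colors $c_1,\dots,c_t\in C_Y$ (possible since $|C_Y|\ge t$), and assign color $c_j$ to every vertex of $I_j$. This uses only colors from $\{1,\dots,k-1\}$, so no new color is introduced. To verify properness it suffices to check the neighbors of each $x\in X$: a neighbor of $x$ inside $X$ lies in a different class $I_{j'}$ and so receives a color $c_{j'}\ne c_j$, while every neighbor of $x$ outside $X$ lies in $N(X)$, whose colors avoid $C_Y$ and hence differ from $x$'s color in $C_Y$. Because $X$ is anticomplete to $Y$, reusing $Y$'s colors on $X$ creates no monochromatic edge with $Y$. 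The result is a proper $(k-1)$-coloring of $G$, the desired contradiction.

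The substance of the argument is concentrated in the second step, where all three hypotheses come together: the bound $\chi(G[X])\le\chi(G[Y])$ guarantees enough colors in $C_Y$ to recolor $X$; the completeness of $Y$ to $N(X)$ forces $\phi(N(X))$ to steer clear of $C_Y$, which is exactly what makes those borrowed colors safe on the external neighbors of $X$; and the anticompleteness of $X$ and $Y$ both keeps $N(X)$ and $Y$ disjoint (so the crux claim holds) and ensures that sharing colors between $X$ and $Y$ is harmless. I do not anticipate a genuine obstacle beyond bookkeeping: the only point needing care is confirming that no color outside $\{1,\dots,k-1\}$ is ever used, which holds because every color assigned to $X$ is drawn from $C_Y\subseteq\{1,\dots,k-1\}$.
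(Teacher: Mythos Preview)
Your argument is correct and is exactly the standard proof of this lemma. Note that the paper itself does not supply a proof: the lemma is quoted from \cite{CGHS21} and stated without argument, so there is no in-paper proof to compare against. Your write-up could serve as the omitted proof without modification.
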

	
	A property on bipartite graphs is shown as follows.
	
	\begin{lemma}[\cite{F93}]\label{2K2}
		Let $G$ be a connected bipartite graph. If $G$ contains a $2K_2$, then $G$ must contain a $P_5$.
	\end{lemma}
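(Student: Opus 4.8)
The plan is to promote the induced $2K_2$ into a long induced path by exploiting connectivity, and then simply read off a $P_5$ from it. Write the bipartition of $G$ as $(X,Y)$ and denote the induced $2K_2$ by its two edges $ab$ and $cd$, so that the only edges among the four distinct vertices $\{a,b,c,d\}$ are $ab$ and $cd$; in particular no vertex of $\{a,b\}$ is adjacent to a vertex of $\{c,d\}$. Since $G$ is connected, there is a path joining these two sets, and I would fix a shortest path $P=v_0v_1\cdots v_k$ with $v_0\in\{a,b\}$ and $v_k\in\{c,d\}$ (a shortest path between the vertex sets $\{a,b\}$ and $\{c,d\}$). Because the endpoints are nonadjacent we have $k\ge 2$, and because shortest paths are always induced, $v_0\cdots v_k$ is an induced path whose internal vertices avoid $\{a,b,c,d\}$.

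The heart of the argument is to re-attach the two ``unused'' vertices of the $2K_2$. Let $v_0'$ be the partner of $v_0$ in its $2K_2$-edge and $v_k'$ the partner of $v_k$, and consider $Q=v_0'\,v_0\,v_1\cdots v_k\,v_k'$. I would show $Q$ is an induced path. By minimality of $P$, the vertex $v_0'\in\{a,b\}$ cannot be adjacent to any $v_j$ with $j\ge 2$, for otherwise $v_0'\,v_j\cdots v_k$ would be a strictly shorter $\{a,b\}$--$\{c,d\}$ path; the symmetric statement holds for $v_k'$. Moreover $v_0'v_k'$ is a non-edge since the $2K_2$ is induced, and one checks $v_0',v_k'$ do not lie on $P$. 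Thus $Q$ is an induced path on $k+3\ge 5$ vertices, and any five consecutive vertices of $Q$ induce a $P_5$, which finishes the proof.

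I expect the only delicate point to be ruling out the two chords $v_0'v_1$ and $v_{k-1}v_k'$, since shortest-path minimality excludes chords from $v_0'$ to $v_2,\dots,v_k$ but says nothing about the immediate neighbour $v_1$. This is exactly where I would invoke the bipartite hypothesis: as $v_0'$ and $v_1$ are both neighbours of $v_0$, they lie in the same colour class, so $v_0'v_1$ is automatically a non-edge, and symmetrically for $v_{k-1}v_k'$. Everything else is routine bookkeeping, so I do not anticipate a genuine obstacle beyond this parity observation.
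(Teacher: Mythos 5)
Your proof is correct, but note that the paper does not prove this lemma at all: it is quoted directly from Fouquet \cite{F93}, so there is no in-text argument to compare with, and your write-up supplies a complete elementary proof where the paper defers to the literature. All the steps check out: a shortest path $P=v_0v_1\cdots v_k$ between the sets $\{a,b\}$ and $\{c,d\}$ is indeed induced, satisfies $k\ge 2$ because the $2K_2$ is induced, and its internal vertices avoid $\{a,b,c,d\}$ (any $v_j\in\{a,b\}$ with $j\ge 1$, or $v_j\in\{c,d\}$ with $j\le k-1$, would yield a strictly shorter $\{a,b\}$--$\{c,d\}$ path); minimality excludes every chord from $v_0'$ to $v_2,\dots,v_{k-1}$ and symmetrically from $v_k'$, while $v_0'v_k$, $v_0v_k'$ and $v_0'v_k'$ are non-edges because the $2K_2$ is induced. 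You also correctly isolate the one genuine subtlety, the potential chords $v_0'v_1$ and $v_{k-1}v_k'$, which shortest-path minimality does not rule out, and your parity argument disposes of them: $v_0'$ and $v_1$ are both neighbours of $v_0$, hence lie in the same colour class. That is exactly where bipartiteness is indispensable: without it the lemma is false --- take two disjoint edges $ab$, $cd$ together with one vertex $x$ adjacent to all of $a,b,c,d$; this graph is connected, contains an induced $2K_2$, yet has no induced $P_5$, and the obstruction is precisely the chord $v_0'v_1$ (here $bx$) that your parity step eliminates. With all chords excluded, $Q=v_0'v_0v_1\cdots v_kv_k'$ is an induced path on $k+3\ge 5$ vertices, and any five consecutive vertices of it induce the required $P_5$.
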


	As we mentioned earlier, there are finitely many 4-vertex-critical $P_5$-free graphs.

	\begin{theorem}[\cite{BHS09,MM12}]\label{thm:finite4Critical}
		If $G=(V,E)$ is a 4-vertex-critical $P_5$-free graph, then $|V|\le 13$.
	\end{theorem}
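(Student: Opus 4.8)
This is a known result, so I sketch the structural approach I would follow to reprove it. Fix a $4$-vertex-critical $P_5$-free graph $G=(V,E)$; the goal is an absolute bound on $|V|$. I would first collect the standard criticality constraints: $G$ is connected, has minimum degree at least $3$, contains no clique cutset by \autoref{lem:xy}, and in particular has no two comparable vertices -- more generally none of the configurations ruled out by \autoref{lem:XY}. Since $\chi(G)=4$ we have $\omega(G)\le 4$, and if $\omega(G)=4$ then a $K_4$ subgraph together with criticality forces $G=K_4$ (any further vertex $v$ would leave $\chi(G-v)\ge 4$), giving $|V|=4$. I may therefore assume $\omega(G)\le 3$; the triangle-free subcase $\omega(G)=2$ is dispatched separately (triangle-free $P_5$-free graphs are $3$-colorable, so it produces no $4$-vertex-critical graph), leaving $\omega(G)=3$ as the substantive situation.

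The engine of the argument is the structure theorem of Bacs\'o and Tuza, in the form that every connected $P_5$-free graph has a dominating clique or a dominating induced $C_5$. Because $\omega(G)=3$, a dominating clique $D$ has at most three vertices, and a dominating $C_5$ has five; in either case $G$ admits a dominating set of bounded size. I would then partition $V\setminus D$ according to the trace $N(u)\cap D$ of each vertex on the dominating set, producing at most $2^{|D|}$ classes, and record the adjacencies forced between classes by $P_5$-freeness.

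The heart of the proof is to bound the size of each class, and here the critical-graph lemmas do the work. Within a single class, two nonadjacent vertices with nested neighborhoods are comparable, which \autoref{lem:XY} forbids; pushing this further, the fact that every $G-v$ is $3$-colorable together with \autoref{lem:XY} limits how many distinct neighborhood patterns a class can realize, while $\omega(G)=3$ caps any clique inside a class at three vertices. For two classes joined across a bipartite boundary, $P_5$-freeness and \autoref{2K2} force that boundary to be $2K_2$-free, i.e.\ a chain graph, which again bounds the number of distinct profiles and feeds a pigeonhole count. Assembling these bounds class by class, together with the absence of clique cutsets from \autoref{lem:xy} to rule out degenerate separations, yields a finite bound on $|V|$, with the extremal configurations occurring in the dominating-$C_5$ case.

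The main obstacle I anticipate is precisely the dominating-$C_5$ case and the tight bookkeeping needed to reach the sharp constant $13$ rather than merely \emph{some} bound. When $D$ induces a $C_5$ the classes interact richly: a vertex seeing two nonconsecutive vertices of the cycle, or seeing four of them, can combine with vertices of other classes to create an induced $P_5$ only after routing through the cycle, so the forbidden-subgraph checks must be carried out on triples of classes simultaneously, using $3$-colorability to eliminate the configurations that would otherwise let a class grow. Controlling these simultaneous interactions is where the real effort lies, and pinning the exact value $13$ (rather than a crude bound) is likely to require an extensive, possibly computer-verified, finite case analysis over the neighborhood traces.
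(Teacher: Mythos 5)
The first thing to say is that the paper does not prove this statement at all: \autoref{thm:finite4Critical} is imported wholesale from \cite{BHS09,MM12} and used as a black box (in the proof of Claim 14), so there is no in-paper proof to match your sketch against. Judged on its own merits, your sketch points at the right toolkit from that literature -- the Bacs\'o--Tuza dominating clique/dominating $C_5$ theorem, partition into neighborhood-trace classes, and the criticality lemmas -- and your easy reductions are sound: $\omega(G)=4$ forces $G=K_4$, and $\omega(G)=2$ is vacuous since $(P_5,K_3)$-free graphs are $3$-colorable.

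The genuine gap is that the decisive step, bounding the size of each trace class, is asserted rather than carried out, and as described it is circular. Two vertices in the same class (same trace on the dominating set $D$) are not comparable, but the witnesses to their incomparability live in \emph{other} classes, which are themselves unbounded at that stage; so ``\autoref{lem:XY} limits how many distinct neighborhood patterns a class can realize'' does not bound the number of \emph{vertices} in a class. Pigeonhole on traces only works once the reference set is already bounded, which is exactly what must be proved. Breaking this circularity requires the kind of component-by-component structural analysis that the present paper performs at length for its $5$-critical theorem (Claims 8--22): showing each class decomposes into homogeneous pieces of bounded type, bounding the \emph{number} of components via \autoref{lem:XY} against an anchor set already known to be bounded, and invoking \autoref{2K2} only where both sides of the bipartite boundary are actually independent (your ``chain graph'' step fails if the classes have internal edges, since a cross-$2K_2$ plus an internal edge yields only a $P_4$). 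Finally, the statement to be proved is the sharp bound $|V|\le 13$, and you explicitly defer that constant to a hypothetical exhaustive case analysis or computer search; so even granting the qualitative outline, the proposal establishes neither finiteness rigorously nor the stated bound.
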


	A graph $G$ is {\em perfect} if $\chi(H)=\omega(H)$ for every induced subgraph $H$ of $G$. Another result we use is the well-known Strong Perfect Graph Theorem.
	
	\begin{theorem}[The Strong Perfect Graph Theorem\cite{CRST06}]\label{thm:SPGT}
		A graph is perfect if and only if it contains no odd holes or odd antiholes.
	\end{theorem}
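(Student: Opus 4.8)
The plan is to prove the two directions separately. The ``only if'' direction is elementary: since perfection is a hereditary property, it suffices to exhibit for each odd hole and each odd antihole an induced subgraph witnessing imperfection. For an odd hole $C_{2k+1}$ with $k\ge 2$ we have $\omega=2$ but $\chi=3$, and for an odd antihole $\overline{C_{2k+1}}$ we have $\omega=k$ but $\chi=k+1$; hence any graph that contains such an induced subgraph fails to be perfect. Consequently every perfect graph is \emph{Berge}, meaning that it contains neither an odd hole nor an odd antihole.

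The substance lies in the converse: every Berge graph is perfect. I would argue by contradiction, assuming the class of imperfect Berge graphs is nonempty and choosing a vertex-minimal member $G$, so that $G$ is a \emph{minimal imperfect} Berge graph while every proper induced subgraph of $G$ is perfect. The first ingredient is a catalogue of \emph{basic} Berge graphs that are directly verified to be perfect: bipartite graphs, line graphs of bipartite graphs, the complements of these two families, and the so-called double split graphs. For bipartite graphs perfection is immediate; for line graphs of bipartite graphs it follows from K\"onig's edge-colouring theorem; and the two complementary cases follow from the Weak Perfect Graph Theorem of Lov\'asz, that perfection is preserved under complementation. One then checks that $G$ cannot itself be basic, since each basic class is perfect.

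The heart of the argument is a structural decomposition theorem stating that every Berge graph is either basic or admits one of a short list of structural cutsets: a $2$-join, a $2$-join in the complement, or a balanced skew partition. Granting this theorem, I would invoke the theory of minimal imperfect graphs. A minimal imperfect graph is \emph{partitionable} in the sense of Lov\'asz, from which one extracts strong regularity: it has exactly $\alpha\omega+1$ vertices, no vertex is simplicial, and, crucially, it can contain none of the above decompositions. The incompatibility of each decomposition with minimality is the technical core of this stage: one shows that a $2$-join (or its complementary analogue) in a minimal imperfect graph would let one amalgamate optimal colourings of the perfect augmented blocks into a colouring of $G$ with $\omega(G)$ colours, and, via Chv\'atal's star-cutset lemma and its strengthenings, that a minimal imperfect graph admits no balanced skew partition. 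Combining these facts, $G$ is neither basic nor decomposable, contradicting the decomposition theorem, so no minimal imperfect Berge graph exists and every Berge graph is perfect.

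The main obstacle is unquestionably the decomposition theorem itself, where essentially all of the difficulty of the Chudnovsky--Robertson--Seymour--Thomas proof resides. Establishing it requires a long and delicate analysis organised around the possible configurations a Berge graph may contain near a shortest odd hole, in particular the treatment of \emph{prisms}, \emph{wheels}, and long induced \emph{stretchers}, together with the Roussel--Rubio lemma for straightening paths across anticonnected sets. I would expect this single step to dominate the entire proof, with the basic-class verifications and the minimal-imperfect reductions serving only as the comparatively routine endpoints of the argument.
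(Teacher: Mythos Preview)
The paper does not prove this theorem at all: it is stated as a background result, attributed to Chudnovsky, Robertson, Seymour, and Thomas \cite{CRST06}, and then invoked as a black box (in the proof of Lemma~\ref{lem:homogeneous}, to deduce that a $K_3$-free, $P_5$-free graph with chromatic number~$3$ must contain a $C_5$). There is therefore nothing in the paper to compare your proposal against.

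That said, what you have written is a faithful high-level outline of the actual Chudnovsky--Robertson--Seymour--Thomas argument: the reduction to a minimal imperfect Berge graph, the list of basic classes, the decomposition theorem (basic, $2$-join, complement $2$-join, or balanced skew partition), and the fact that a minimal imperfect graph admits none of these decompositions. Your caveat that the decomposition theorem is where essentially all the work lies is accurate; in the context of the present paper, however, no proof is expected---the theorem is simply quoted.
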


	Moreover, we prove a property about homogeneous sets, which will be used frequently in the proof of our results.

\begin{lemma}\label{lem:homogeneous}
	Let $G$ be a 5-vertex-critical $P_5$-free graph and $S$ be a homogeneous set of $V(G)$. For each component $A$ of $G[S]$, 
	
	\begin{enumerate}[(i)]
		\item if $\chi(A)=1$, then $A$ is a $K_1$;
		\item if $\chi(A)=2$, then $A$ is a $K_2$;
		\item if $\chi(A)=3$, then $A$ is a $K_3$ or a $C_5$.
	\end{enumerate}
\end{lemma}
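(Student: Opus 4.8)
The plan is to turn the homogeneity of $S$ into a strong form of the ``no comparable vertices'' property and then feed it, together with \autoref{2K2} and \autoref{thm:SPGT}, into a case analysis driven by $P_5$-freeness. First I would record the main tool. Since $S$ is homogeneous, every vertex of $V(G)\setminus S$ is complete or anticomplete to $S$, so the external neighbourhood $T:=N(S)\setminus S$ is complete to all of $S$; moreover, if $A$ is a component of $G[S]$ then $N_A(v)=N_S(v)$ for every $v\in A$. Consequently, for disjoint nonempty $X,Y\subseteq A$ that are anticomplete in $A$ with $\chi(A[X])\le\chi(A[Y])$ and with $Y$ complete to $N_A(X)$, the set $Y$ is automatically complete to $N_G(X)=N_A(X)\cup T$ (the part $T$ is complete to $Y$ since $Y\subseteq S$), so \autoref{lem:XY} yields a contradiction. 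Taking $X,Y$ to be single vertices gives the \emph{domination-free property}: $A$ has no two nonadjacent vertices $x,y$ with $N_A(x)\subseteq N_A(y)$. (Applying the same tool to two distinct components shows $G[S]$ is in fact connected, so $A=S$; we will only need the per-component statement.)

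Parts (i) and (ii) are then short. If $\chi(A)=1$ then $A$ has no edge, and a connected edgeless graph is $K_1$. If $\chi(A)=2$ then $A$ is bipartite, and by \autoref{2K2} a connected $P_5$-free bipartite graph contains no $2K_2$; a $2K_2$-free bipartite graph is a chain graph, i.e.\ on each side the neighbourhoods are linearly ordered by inclusion (two incomparable neighbourhoods on one side immediately produce an induced $2K_2$). If $A\neq K_2$, then one side has two vertices; they are nonadjacent with nested neighbourhoods, contradicting the domination-free property. Hence $A=K_2$.

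For part (iii) I would first locate a $3$-chromatic ``core''. Since $\chi(A)=3$, $A$ contains an odd cycle, so a shortest odd cycle of $A$ is an induced $C_5$ or a triangle (odd holes of length $\ge 7$ contain an induced $P_5$). If $A$ has no induced $C_5$, then $A$ has no odd hole at all, and no odd antihole either: $\overline{C_5}=C_5$, while $\overline{C_{2k+1}}$ for $k\ge3$ has chromatic number $>3$ and cannot sit inside a $3$-chromatic graph. By \autoref{thm:SPGT} such an $A$ is perfect, so $\omega(A)=\chi(A)=3$ and $A$ contains a $K_3$. Thus $A$ contains a core $Z\in\{C_5,K_3\}$ with $\chi(Z)=3$, and the goal becomes to show $A=Z$. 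The elimination of a would-be vertex outside $Z$ combines three ingredients: $\chi(A)=3$ forbids a vertex complete to an induced $C_5$ (which would create a wheel of chromatic number $4$) and $\omega(A)=3$ forbids a $K_4$; $P_5$-freeness drastically limits the possible attachments to $Z$ (for example, a vertex adjacent to exactly one vertex of $C_5$, or to two adjacent vertices of $C_5$, creates an induced $P_5$); and each surviving attachment pattern is killed by exhibiting an anticomplete pair $(X,Y)$ as in the first paragraph.

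The main obstacle is precisely this last step, because the \emph{single-vertex} domination-free property does not suffice. Indeed, adding to an induced $C_5=v_1v_2v_3v_4v_5$ a vertex $u$ adjacent to exactly $v_1,v_2,v_3$ produces a $P_5$-free, $3$-chromatic graph in which no two vertices are comparable, so it survives every singleton test; it is eliminated only by the set version of \autoref{lem:XY}, e.g.\ with $X=\{v_4,v_5\}$ and $Y=\{v_2,u\}$ (both have chromatic number $2$, they are anticomplete, and $\{v_2,u\}$ is complete to $N_A(\{v_4,v_5\})=\{v_1,v_3\}$). The real work is therefore a finite but careful case analysis of how the vertices of $A\setminus Z$ attach to $Z$ and to one another, choosing for each configuration a suitable pair $(X,Y)$ with $|X|,|Y|\le 2$; $P_5$-freeness guarantees that only finitely many attachment types occur, and the set-comparability tool closes each of them, forcing $A=C_5$ or $A=K_3$.
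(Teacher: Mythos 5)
Your reduction of homogeneity to a localized version of \autoref{lem:XY} is sound (and the parenthetical that $G[S]$ is connected is correct), and parts (i) and (ii) are complete: the chain-graph argument via \autoref{2K2} is a valid alternative to what the paper does. But part (iii) stops exactly where the difficulty begins, and the tool you propose to finish with provably cannot succeed. Consider $A=\overline{C_6}$, the triangular prism: triangles $x_1x_3x_5$ and $x_2x_4x_6$ joined by the matching $x_1x_4$, $x_2x_5$, $x_3x_6$. It is connected, $3$-chromatic, $P_5$-free (its independence number is $2$, while $\alpha(P_5)=3$), contains a $K_3$, and is neither $K_3$ nor $C_5$. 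Yet it admits \emph{no} pair $(X,Y)$ satisfying the hypotheses of \autoref{lem:XY} inside $A$, of any sizes: each vertex has exactly two non-neighbors, and distinct vertices have distinct non-neighbor pairs, so up to swapping roles every anticomplete pair has the form $X=\{v\}$ with $Y$ contained in the two non-neighbors of $v$. By vertex-transitivity check $v=x_1$: neither non-neighbor $x_2$ nor $x_6$ is complete to $N_A(x_1)=\{x_3,x_4,x_5\}$ (as $x_2x_3$ and $x_6x_5$ are non-edges), and with the roles swapped the condition $\chi(G[X])\le\chi(G[Y])$ fails because $x_2x_6$ is an edge. In particular the prism passes every singleton and every two-vertex test, so no case analysis ``closed by the set-comparability tool'' can force $A\in\{K_3,C_5\}$; your plan for (iii) is not merely deferred work but rests on an insufficient closing device.

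What is missing is a use of $5$-vertex-criticality beyond \autoref{lem:XY}, and this is exactly how the paper proves (ii) and (iii) without any case analysis. Suppose $A$ contains a $K_3$ (resp.\ an induced $C_5$) together with one further vertex $s$. By criticality $G-s$ has a $4$-coloring $\pi$. Since $A-s$ still contains the $K_3$ (resp.\ the $C_5$), $\pi$ uses at least three colors on $A-s$; since $N(A)\setminus A$ is complete to $A$ by homogeneity, the colors on $N(A)\setminus A$ are disjoint from those on $A-s$, so at most one color appears on $N(A)\setminus A$ and three colors are free for $A$. Because $\chi(A)=3$, one may replace $\pi$ on $A-s$ by the restriction of a fixed proper $3$-coloring of all of $A$ using those three free colors, and then insert $s$ with its color from that $3$-coloring, producing a $4$-coloring of $G$ --- contradicting criticality. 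This recoloring argument eliminates the prism and every other configuration in one stroke (the same trick with two colors gives the paper's proof of (ii)). To repair your proof of (iii), you would need to import this recoloring step or some equivalent appeal to criticality; your SPGT step producing the $K_3$/$C_5$ core is fine and matches the paper --- indeed your explicit exclusion of odd antiholes $\overline{C_{2k+1}}$, $k\ge 3$, via $\chi>3$ is more careful than the paper's one-line version.
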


\begin{proof}
	(i) is clearly true. Moreover, since $V(A)\subseteq S$, $V(A)$ is also a homogeneous set. Next we prove (ii) and (iii).
	
	(ii)Since $\chi(A)=2$, let $\{x,y\}\subseteq V(A)$ induce a $K_2$. Suppose that there is another vertex $z$ in $A$. Because $G$ is 5-vertex-critical, $G-z$ is 4-colorable. Since $\chi(A)=2$, let $\{V_1,V_2,V_3,V_4\}$ be a 4-coloring of $G-z$ where $V(A)\setminus\{z\}\subseteq V_1\cup V_2$. Since $A$ is homogeneous, $\{V_1\cup \{z\},V_2,V_3,V_4\}$ or $\{V_1,V_2\cup \{z\},V_3,V_4\}$ is a 4-coloring of $G$, a contradiction. Thus $A$ is a $K_2$.
	
	(iii)We first show that $G$ must contain a $K_3$ or a $C_5$. If $A$ is $K_3$-free, then $\omega(A)<\chi(A)=3$ and so $A$ is imperfect. Since $A$ is $P_5$-free, $A$ must contain a $C_5$ by \autoref{thm:SPGT}. Thus $A$ contains either a $K_3$ or a $C_5$. 
	
	If $A$ contains a $K_3$ induced by $\{x,y,z\}$, suppose that there is another vertex $s$ in $A$. Because $G$ is 5-vertex-critical, $G-s$ is 4-colorable. Since $\chi(A)=3$, let $\{V_1,V_2,V_3,V_4\}$ be a 4-coloring of $G-s$ where $V(A)\setminus\{s\}\subseteq V_1\cup V_2\cup V_3$. Since $A$ is homogeneous, $\{V_1\cup \{s\},V_2,V_3,V_4\}$, $\{V_1,V_2\cup \{s\},V_3,V_4\}$ or $\{V_1,V_2,V_3\cup \{s\},V_4\}$ is a 4-coloring of $G$, a contradiction. Thus $A$ is a $K_3$. Similarly, $A$ is a $C_5$ if $A$ contains a $C_5$.
\end{proof}
	
	\section{Structure around a 5-hole}\label{structure}
	
	Let $G=(V,E)$ be a graph and $H$ be an induced subgraph of $G$. We partition $V\setminus V(H)$ into subsets with respect to $H$ as follows: for any $X\subseteq V(H)$, we denote by $S(X)$ the set of vertices in $V\setminus V(H)$ that have $X$ as their neighborhood among $V(H)$, i.e.,
	$$S(X)=\{v\in V\setminus V(H): N_{V(H)}(v)=X\}.$$
	\noindent For $0\le m\le|V(H)|$, we denote by $S_m$ the set of vertices in $V\setminus V(H)$ that have exactly $m$ neighbors in $V(H)$. Note that $S_m=\cup_{X\subseteq V(H):|X|=m}S(X)$.
	
	Let $G$ be a ($P_5$,bull)-free graph and $C=v_1,v_2,v_3,v_4,v_5$ be an induced $C_5$ in $G$. We partition $V\setminus C$ with respect to $C$ as above. All subscripts below are modulo five. Clearly, $S_1=\emptyset$ and so $V(G)=V(C)\cup S_0\cup S_2\cup S_3\cup S_4\cup S_5$. Since $G$ is ($P_5$,bull)-free, it is easy to verify that $S(v_i,v_{i+1})=S(v_{i-2},v_i,v_{i+2})=\emptyset$. So $S_2=\cup_{1\le i\le 5}S(v_{i-1},v_{i+1})$ and $S_3=\cup_{1\le i\le 5}S(v_{i-1},v_{i},v_{i+1})$. Note that $S_4=\cup_{1\le i\le 5}S(v_{i-2},v_{i-1},v_{i+1},v_{i+2})$. In the following, we write $S_2(i)$ for $S(v_{i-1},v_{i+1})$, $S_3(i)$ for $S(v_{i-1},v_{i},v_{i+1})$ and $S_4(i)$ for $S(v_{i-2},v_{i-1},v_{i+1},v_{i+2})$. We now prove a number of useful properties of $S(X)$ using the fact that $G$ is ($P_5$,bull)-free. All properties are proved for $i=1$ due to symmetry. In the following, if we say that $\{r,s,t,u,v\}$ induces a bull, it means that $r,v$ are two pendant vertices, $s$ is the neighbor of $r$, $u$ is the neighbor of $v$, and $stu$ is a triangle. If we say that $\{r,s,t,u,v\}$ induces a $P_5$, it means that any two consecutive vertices are adjacent.
	
	\begin{enumerate}[label=\bfseries (\arabic*)]
		
		\item {$S_2(i)$ is complete to $S_2(i+1)\cup S_3(i+1)$.}\label{S2(i)S2(i+1)}
		
		Let $x\in S_2(1)$ and $y\in S_2(2)\cup S_3(2)$. If $xy\notin E$, then $\{x,v_5,v_4,v_3,y\}$ induces a $P_5$.
		
		\item {$S_2(i)$ is anticomplete to $S_2(i+2)$.}\label{S2(i)S2(i+2)}
		
		Let $x\in S_2(1)$ and $y\in S_2(3)$. If $xy\in E$, then $\{v_3,v_2,y,x,v_5\}$ induces a bull.
		
		\item {$S_2(i)$ is anticomplete to $S_3(i+2)$.}\label{S2(i)S3(i+2)} 
		
		Let $x\in S_2(1)$ and $y\in S_3(3)$. If $xy\in E$, then $\{v_1,v_2,x,y,v_4\}$ induces a bull.
		
		\item {$S_2(i)$ is anticomplete to $S_4(i)$.}\label{S2(i)S4(i)} 
		
		Let $x\in S_2(1)$ and $y\in S_4(1)$. If $xy\in E$, then $\{v_1,v_2,x,y,v_4\}$ induces a bull. 
		
		\item {$S_2(i)\cup S_3(i)$ is complete to $S_4(i+2)$.}\label{S2(i)S4(i+2)} 
		
		Let $x\in S_2(1)\cup S_3(1)$ and $y\in S_4(3)$. If $xy\notin E$, then $\{v_3,v_4,y,v_5,x\}$ induces a bull. 
		
		\item {$S_2(i)$ is complete to $S_4(i+1)\cup S_5$.}\label{S2S5} 
		
		Let $x\in S_2(1)$ and $y\in S_4(2)\cup S_5$. If $xy\notin E$, then $\{v_3,y,v_1,v_5,x\}$ induces a bull. 
		
		\item {$S_3(i)$ is complete to $S_3(i+1)$.}\label{S3(i)S3(i+1)} 
		
		Let $x\in S_3(1)$ and $y\in S_3(2)$. If $xy\notin E$, then $\{x,v_5,v_4,v_3,y\}$ induces a $P_5$. 
		
	\end{enumerate}
	\section{The main result}\label{bull-free}
	
	Let $\mathcal{F}$ be the set of graphs shown in \autoref{fig:5vertexcritical}. It is easy to verify that all graphs in $\mathcal{F}$ are 5-vertex-critical.
	
	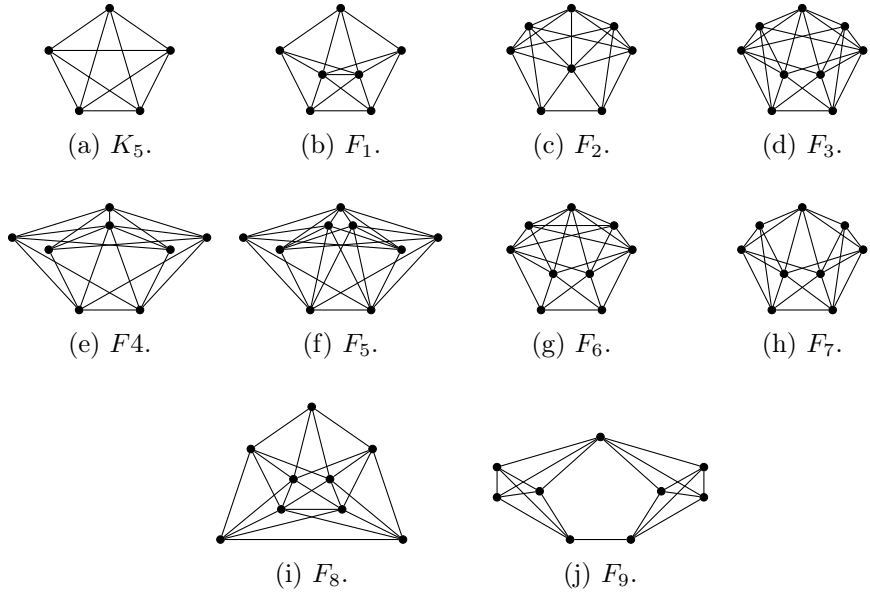
\begin{figure}[h]
		\centering
		\begin{subfigure}[t]{.24\textwidth}
			\centering
			\begin{tikzpicture}[scale=0.8]
				\tikzstyle{vertex}=[draw, circle, fill=black!100, minimum width=1pt,inner sep=1pt]
				
				\node[vertex] (v1) at (0.5,0) {};
				\node[vertex] (v2) at (1.5,0) {};
				\node[vertex] (v3) at (2,1) {};
				\node[vertex] (v4) at (1,1.7){};
				\node[vertex] (v5) at (0,1) {};
				\draw (v1)--(v2)--(v3)--(v4)--(v5)--(v1);
				
				\draw (v1)--(v3) (v1)--(v4) (v2)--(v4) (v2)--(v5) (v3)--(v5);
				
			\end{tikzpicture}
			\caption {$K_5$.}
			\label{K5}
		\end{subfigure}%
		\begin{subfigure}[t]{.24\textwidth}
			\centering
			\begin{tikzpicture}[scale=0.8]
				\tikzstyle{vertex}=[draw, circle, fill=black!100, minimum width=1pt,inner sep=1pt]
				
				\node[vertex] (v1) at (8,1.7) {};
				\node[vertex] (v2) at (7,1) {};
				\node[vertex] (v3) at (7.5,0) {};
				\node[vertex] (v4) at (8.5,0) {};
				\node[vertex] (v5) at (9,1) {};
				
				\draw (v1)--(v2)--(v3)--(v4)--(v5)--(v1);
				
				\node[vertex] (s51) at (7.7,0.6) {};
				\node[vertex] (s52) at (8.3,0.6) {};
				
				\draw (s51)--(s52);
				
				\foreach \i in {1,...,5}
				{
					\draw (s51)--(v\i);
					\draw (s52)--(v\i);
				}
				
			\end{tikzpicture}
			\caption{$F_1$.}
			\label{F1}
		\end{subfigure}%
		\begin{subfigure}[t]{.24\textwidth}
			\centering
			\begin{tikzpicture}[scale=0.8]
				\tikzstyle{vertex}=[draw, circle, fill=black!100, minimum width=1pt,inner sep=1pt]
				
				\node[vertex] (v1) at (8,1.7) {};
				\node[vertex] (v2) at (7,1) {};
				\node[vertex] (v3) at (7.5,0) {};
				\node[vertex] (v4) at (8.5,0) {};
				\node[vertex] (v5) at (9,1) {};
				
				\draw (v1)--(v2)--(v3)--(v4)--(v5)--(v1);
				
				\node[vertex] (s5) at (8,0.7) {};
				\node[vertex] (s41) at (7.3,1.4) {};
				\node[vertex] (s42) at (8.7,1.4) {};
				
				\draw (s5)--(s41) (s5)--(s42);;
				
				\foreach \i in {1,...,5}
				{
					\draw (s5)--(v\i);
				}
				\foreach \i in {1,2,3,5}
				{
					\draw (s41)--(v\i);
				}
				\foreach \i in {1,2,4,5}
				{
					\draw (s42)--(v\i);
				}
			\end{tikzpicture}
			\caption{$F_2$.}
			\label{F2}
		\end{subfigure}%
		\begin{subfigure}[t]{.24\textwidth}
			\centering
			\begin{tikzpicture}[scale=0.8]
				\tikzstyle{vertex}=[draw, circle, fill=black!100, minimum width=1pt,inner sep=1pt]
				
				\node[vertex] (v1) at (8,1.7) {};
				\node[vertex] (v2) at (7,1) {};
				\node[vertex] (v3) at (7.5,0) {};
				\node[vertex] (v4) at (8.5,0) {};
				\node[vertex] (v5) at (9,1) {};
				
				\draw (v1)--(v2)--(v3)--(v4)--(v5)--(v1);
				
				\node[vertex] (s51) at (7.7,0.6) {};
				\node[vertex] (s52) at (8.3,0.6) {};
				
				\foreach \i in {1,...,5}
				{
					\draw (s51)--(v\i);
					\draw (s52)--(v\i);
				}
				
				\node[vertex] (s41) at (7.3,1.4) {};
				\node[vertex] (s42) at (8.7,1.4) {};
				
				\foreach \i in {1,2,3,5}
				{
					\draw (s41)--(v\i);
				}
				\foreach \i in {1,2,4,5}
				{
					\draw (s42)--(v\i);
				}
				\draw (s41)--(s51) (s42)--(s52);
				
			\end{tikzpicture}
			\caption{$F_3$.}
			\label{F3}
		\end{subfigure}%
		\vspace{5mm}
		\begin{subfigure}[t]{.24\textwidth}
			\centering
			\begin{tikzpicture}[scale=0.8]
				\tikzstyle{vertex}=[draw, circle, fill=black!100, minimum width=1pt,inner sep=1pt]
				
				\node[vertex] (v1) at (8,1.7) {};
				\node[vertex] (v2) at (7,1) {};
				\node[vertex] (v3) at (7.5,0) {};
				\node[vertex] (v4) at (8.5,0) {};
				\node[vertex] (v5) at (9,1) {};
				
				\draw (v1)--(v2)--(v3)--(v4)--(v5)--(v1);
				
				\node[vertex] (s5) at (8,1.4) {};
				\node[vertex] (s41) at (6.4,1.2) {};
				\node[vertex] (s42) at (9.6,1.2) {};
				
				\draw (s5)--(s41) (s5)--(s42);;
				
				\foreach \i in {1,...,5}
				{
					\draw (s5)--(v\i);
				}
				\foreach \i in {1,3,4,5}
				{
					\draw (s41)--(v\i);
				}
				\foreach \i in {1,2,3,4}
				{
					\draw (s42)--(v\i);
				}
			\end{tikzpicture}
			\caption{$F4$.}
			\label{F4}
		\end{subfigure}%
		\begin{subfigure}[t]{.24\textwidth}
			\centering
			\begin{tikzpicture}[scale=0.8]
				\tikzstyle{vertex}=[draw, circle, fill=black!100, minimum width=1pt,inner sep=1pt]
				
				\node[vertex] (v1) at (8,1.7) {};
				\node[vertex] (v2) at (7,1) {};
				\node[vertex] (v3) at (7.5,0) {};
				\node[vertex] (v4) at (8.5,0) {};
				\node[vertex] (v5) at (9,1) {};
				
				\draw (v1)--(v2)--(v3)--(v4)--(v5)--(v1);
				
				\node[vertex] (s41) at (6.4,1.2) {};
				\node[vertex] (s42) at (9.6,1.2) {};
				\node[vertex] (s51) at (7.8,1.4) {};
				\node[vertex] (s52) at (8.2,1.4) {};
				
				\foreach \i in {1,...,5}
				{
					\draw (s51)--(v\i);
					\draw (s52)--(v\i);
				}
				\foreach \i in {1,3,4,5}
				{
					\draw (s41)--(v\i);
				}
				\foreach \i in {1,2,3,4}
				{
					\draw (s42)--(v\i);
				}
				\draw (s41)--(s51) (s42)--(s52);
				
			\end{tikzpicture}
			\caption{$F_5$.}
			\label{F5}
		\end{subfigure}%
		\begin{subfigure}[t]{.24\textwidth}
			\centering
			\begin{tikzpicture}[scale=0.8]
				\tikzstyle{vertex}=[draw, circle, fill=black!100, minimum width=1pt,inner sep=1pt]
				
				\node[vertex] (v1) at (8,1.7) {};
				\node[vertex] (v2) at (7,1) {};
				\node[vertex] (v3) at (7.5,0) {};
				\node[vertex] (v4) at (8.5,0) {};
				\node[vertex] (v5) at (9,1) {};
				
				\draw (v1)--(v2)--(v3)--(v4)--(v5)--(v1);
				
				\node[vertex] (s51) at (7.7,0.6) {};
				\node[vertex] (s52) at (8.3,0.6) {};
				
				\foreach \i in {1,...,5}
				{
					\draw (s51)--(v\i);
					\draw (s52)--(v\i);
				}
				
				\node[vertex] (s41) at (7.3,1.4) {};
				\node[vertex] (s42) at (8.7,1.4) {};
				
				\foreach \i in {1,2,5}
				{
					\draw (s41)--(v\i);
					\draw (s42)--(v\i);
				}
				\draw (s41)--(s51) (s42)--(s52) (s41)--(s42);
				
			\end{tikzpicture}
			\caption{$F_6$.}
			\label{F6}
		\end{subfigure}%
		\begin{subfigure}[t]{.24\textwidth}
			\centering
			\begin{tikzpicture}[scale=0.8]
				\tikzstyle{vertex}=[draw, circle, fill=black!100, minimum width=1pt,inner sep=1pt]
				
				\node[vertex] (v1) at (8,1.7) {};
				\node[vertex] (v2) at (7,1) {};
				\node[vertex] (v3) at (7.5,0) {};
				\node[vertex] (v4) at (8.5,0) {};
				\node[vertex] (v5) at (9,1) {};
				
				\draw (v1)--(v2)--(v3)--(v4)--(v5)--(v1);
				
				\node[vertex] (s51) at (7.7,0.6) {};
				\node[vertex] (s52) at (8.3,0.6) {};
				
				\foreach \i in {1,...,5}
				{
					\draw (s51)--(v\i);
					\draw (s52)--(v\i);
				}
				
				\node[vertex] (s41) at (7.3,1.4) {};
				\node[vertex] (s42) at (8.7,1.4) {};
				
				\foreach \i in {1,2,3}
				{
					\draw (s41)--(v\i);
				}
				\foreach \i in {1,4,5}
				{
					\draw (s42)--(v\i);
				}
				\draw (s41)--(s51) (s42)--(s52);
				
			\end{tikzpicture}
			\caption{$F_7$.}
			\label{F7}
		\end{subfigure}%
		\vspace{5mm}
		\begin{subfigure}[t]{.3\textwidth}
			\centering
			\begin{tikzpicture}[scale=0.8]
				\tikzstyle{vertex}=[draw, circle, fill=black!100, minimum width=1pt,inner sep=1pt]
				
				\node[vertex] (v1) at (8,2.2) {};
				\node[vertex] (v2) at (7,1.5) {};
				\node[vertex] (v3) at (7.5,0.5) {};
				\node[vertex] (v4) at (8.5,0.5) {};
				\node[vertex] (v5) at (9,1.5) {};
				
				\draw (v1)--(v2)--(v3)--(v4)--(v5)--(v1);
				
				\node[vertex] (s51) at (7.7,1) {};
				\node[vertex] (s52) at (8.3,1) {};
				
				\foreach \i in {1,...,5}
				{
					\draw (s51)--(v\i);
					\draw (s52)--(v\i);
				}
				
				\node[vertex] (s41) at (6.5,0) {};
				\node[vertex] (s42) at (9.5,0) {};
				
				\foreach \i in {2,3,4}
				{
					\draw (s41)--(v\i);
				}
				\foreach \i in {3,4,5}
				{
					\draw (s42)--(v\i);
				}
				\draw (s41)--(s51) (s42)--(s52) (s41)--(s42);
				
			\end{tikzpicture}
			\caption{$F_8$.}
			\label{F8}
		\end{subfigure}%
		\begin{subfigure}[t]{.3\textwidth}
			\centering
			\begin{tikzpicture}[scale=0.8]
				\tikzstyle{vertex}=[draw, circle, fill=black!100, minimum width=1pt,inner sep=1pt]
				
				\node[vertex] (v1) at (8,1.7) {};
				\node[vertex] (v2) at (7,0.8) {};
				\node[vertex] (v3) at (7.5,0) {};
				\node[vertex] (v4) at (8.5,0) {};
				\node[vertex] (v5) at (9,0.8) {};
				
				\draw (v1)--(v2)--(v3)--(v4)--(v5)--(v1);
				
				\node[vertex] (s321) at (6.3,1.2) {};
				\node[vertex] (s322) at (6.3,0.7) {};
				\node[vertex] (s351) at (9.7,1.2) {};
				\node[vertex] (s352) at (9.7,0.7) {};
				
				\foreach \i in {1,2,3}
				{
					\draw (s321)--(v\i);
					\draw (s322)--(v\i);
				}
				\foreach \i in {1,4,5}
				{
					\draw (s351)--(v\i);
					\draw (s352)--(v\i);
				}
				\draw (s321)--(s322) (s351)--(s352);
				
			\end{tikzpicture}
			\caption{$F_9$.}
			\label{F9}
		\end{subfigure}%
		
		\caption{Some 5-vertex-critical graphs.}
		\label{fig:5vertexcritical}
	\end{figure}
	
	\begin{theorem}\label{th}
		There are finitely many 5-vertex-critical $(P_5,bull)$-free graphs.
	\end{theorem}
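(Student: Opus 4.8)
The plan is to bound $|V(G)|$ by a universal constant. I would first dispose of the case where $G$ is $C_5$-free: such a $G$ is $(P_5,C_5)$-free and $5$-vertex-critical, and it is known \cite{HMRSV15} that there are exactly $13$ such graphs, so only finitely many arise here. Hence I may assume $G$ contains an induced $C_5 = v_1v_2v_3v_4v_5$ and set up the partition $V = V(C)\cup S_0\cup S_2\cup S_3\cup S_4\cup S_5$ with the properties \ref{S2(i)S2(i+1)}--\ref{S3(i)S3(i+1)}. A crucial first observation is that an induced $C_5$ is an odd hole, so by \autoref{thm:SPGT} the graph $G$ is imperfect; since $\omega(G)=5$ would force $K_5\subseteq G$ and thus (by $5$-vertex-criticality, as any extra vertex would keep $\chi\ge5$) force $G=K_5$, which is $C_5$-free, I conclude $\omega(G)\le 4$. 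Thus every clique of $G$ has at most $4$ vertices; in particular a triangle inside $S_5$, or inside any $S_4(i)$, together with two consecutive cycle vertices would produce a $K_5$, so $S_5$ and each $S_4(i)$ are triangle-free.

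Next I would record how $S_0$ attaches to the rest, since \ref{S2(i)S2(i+1)}--\ref{S3(i)S3(i+1)} say nothing about it. A one-line $P_5$-argument shows $S_0$ is anticomplete to $S_2\cup S_3$: if $u\in S_0$ were adjacent to $w\in S_2(1)\cup S_3(1)$, then $\{u,w,v_2,v_3,v_4\}$ would induce a $P_5$. Hence $N(S_0)\subseteq S_0\cup S_4\cup S_5$, confining the ``far'' part of $G$ to attach only through the high-degree sets. I would also isolate one clean general bound: for every nonempty type $X$, some cycle vertex $v_j\in X$ is complete to $S(X)$, so $\chi\bigl(G[S(X)\cup\{v_j\}]\bigr)=\chi(G[S(X)])+1$; were this $\ge 5$, vertex-criticality would make this proper subgraph all of $G$, contradicting $v_{j+2}\notin S(X)$. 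Therefore $\chi(G[S(X)])\le 3$ for every nonempty type $X$.

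The core of the argument is then to bound the size of each piece. The strategy is to prove, through a sequence of claims extending \ref{S2(i)S2(i+1)}--\ref{S3(i)S3(i+1)} by further $(P_5,\text{bull})$-free case checks, that each nonempty-type set $S(X)$ is a homogeneous set of $G$. Once homogeneity is in hand, \autoref{lem:XY} forces $G[S(X)]$ to be connected: two distinct components $A,B$ with $\chi(A)\le\chi(B)$ would violate the lemma with the choice $(X,Y)=(A,B)$, since $B$ is complete to the external neighbours of $A$ and anticomplete to $A$. Combining connectivity with $\chi(G[S(X)])\le 3$ from the previous paragraph, \autoref{lem:homogeneous} pins $G[S(X)]$ down to one of $K_1,K_2,K_3,C_5$, and the triangle-freeness above further excludes $K_3$ for $S_5$ and the $S_4(i)$. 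This bounds each of $|S_5|,|S_4(i)|,|S_3(i)|,|S_2(i)|$ by a constant. Wherever homogeneity fails outright, I would instead invoke the pigeonhole principle: among too many vertices of a fixed type, two would acquire identical external neighbourhoods and hence be comparable, contradicting the singleton case of \autoref{lem:XY}.

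Finally I would bound $S_0$, which I expect to be the main obstacle. Because $N(S_0)\subseteq S_4\cup S_5$ and the latter now has bounded size, each vertex of $S_0$ has one of only boundedly many neighbourhoods into $S_4\cup S_5$; combining this with the absence of clique cutsets (\autoref{lem:xy}) and of comparable vertices, together with the $(P_5,\text{bull})$-freeness of $G[S_0]$, I would argue that $S_0$ cannot be large (indeed that it is empty or reduces to a bounded critical-like remnant, where \autoref{thm:finite4Critical} can be brought to bear). Summing the constant bounds over the finitely many types $X$ then yields $|V(G)|\le c$ for an absolute constant $c$, completing the proof. The delicate points I anticipate are verifying homogeneity of the high-degree sets $S_4$ and $S_5$ — the interactions $S_4$--$S_5$ and $S_3$--$S_5$, and the internal structure of $S_0$, are exactly what \ref{S2(i)S2(i+1)}--\ref{S3(i)S3(i+1)} do not cover and where the bulk of the new case analysis must go — and ruling out a large $S_0$.
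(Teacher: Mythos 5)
Your skeleton reproduces the paper's correct first moves (the reduction to the $C_5$-containing case, the partition into the sets $S(X)$ with properties (1)--(7), the bound $\chi(G[S(X)])\le 3$, and the observation that a homogeneous $S(X)$ would be pinned down by \autoref{lem:XY} and \autoref{lem:homogeneous}), but the central plank of your plan --- proving that \emph{every} nonempty-type set $S(X)$ is homogeneous --- is false, not merely delicate. Vertices of $S_3(i)$ can be mixed on $S_2(i)$ (nothing in (1)--(7) excludes this, and the paper's longest stretch of work, the split of $S_2(i)$ into $R(i)=S_2(i)\cap N(S_3(i))$ and $L(i)$, the proof that $G[R(i)]$ is $P_3$-free, and the counting of its $K_1$-, $K_2$- and $K_3$-components, exists precisely because it happens); vertices of $S_5$ can be mixed on edges of $G[S_0]$ (the paper's proof that $G[S_0]$ is connected \emph{relies} on such mixed vertices); and vertices of $S_3$ can be mixed on $S_5$ (as in the critical graph $F_7$). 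Your fallback is also unsound as stated: two vertices of $S(X)$ with identical neighbourhoods \emph{outside} $S(X)$ need not be comparable, because their private neighbours may lie inside $S(X)$. The paper escapes this exactly where you cannot: it applies the pigeonhole only to $K_1$-components (which have no internal neighbours), or within one colour class of a bipartite 2-chromatic component, where a pair of private neighbours yields a $2K_2$ and hence a $P_5$ via \autoref{2K2} --- a lemma your outline never invokes.

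There are two further structural gaps. First, an ordering circularity: you bound $S_4\cup S_5$ before $S_0$, but any workable bound on $|S_5|$ must classify its vertices by their neighbourhoods in $S_0$ (private neighbours of two $S_5$-vertices cannot otherwise be kept out of $S_0$), so it presupposes $|S_0|$ bounded. The paper breaks the cycle by bounding $S_0$ first and independently: it proves the stronger $N(S_0)\subseteq S_5$ (your $P_5$-argument only gives $N(S_0)\subseteq S_4\cup S_5$; excluding $S_4$ needs the bull $\{v_i,v_{i+1},v_{i+2},x,y\}$), then decomposes $S_0$ into $L=S_0\cap N(S_5)$ and $R=S_0\setminus L$, and uses \autoref{lem:XY}, \autoref{lem:homogeneous} and \autoref{thm:finite4Critical} to get $|S_0|\le 16$, and only then $|S_5|\le 2^{16}$; your closing paragraph on $S_0$ names tools but no mechanism. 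Second, you never introduce the paper's key device: the family $\mathcal{F}$ of nine explicit 5-chromatic graphs, with the observation that if $G$ contains any $F\in\mathcal{F}$ as a subgraph then $G=F$ by vertex-criticality. You use this trick once (for $\omega(G)\le 4$) but not where it is essential: the absence of $F_1$ is what makes $S_5$ independent (your $\omega$-argument only gives triangle-freeness), $F_2$--$F_5$ give that at most one $S_4(i)$ meets $S_5$, $F_6$--$F_8$ kill the cases in the $S_5$ pigeonhole, and $F_9$ drives the analysis of 2-chromatic $S_3(i)$ and $S_4(i)$. Pure $(P_5,\text{bull})$-freeness case checks, as you propose, cannot reproduce these steps, because the offending configurations are themselves $(P_5,\text{bull})$-free; they are excluded only by criticality. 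As it stands, the proposal is a plausible research plan whose two load-bearing claims (universal homogeneity, and bounding $S_5$ before $S_0$) fail, so it does not constitute a proof.
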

	
	\begin{proof}
		Let $G=(V,E)$ be a 5-vertex-critical ($P_5$,bull)-free graph. We show that $|G|$ is bounded. If $G$ has a subgraph isomorphic to a member $F\in\mathcal{F}$, then $|V(G)|=|V(F)|$ by the definition of vertex-critical graph and so we are done. Hence, we assume in the following that $G$ has no subgraph isomorphic to a member in $\mathcal{F}$. Since there are exactly 13 5-vertex-critical $(P_5,C_5$)-free graphs \cite{HMRSV15}, the proof is completed if $G$ is $C_5$-free. So assume that $G$ contains an induced $C_5$ in the following. Let $C={v_1,v_2,v_3,v_4,v_5}$ be an induced $C_5$. We partition $V(G)$ with respect to $C$.
		
		\begin{claim}\label{S5} 
			$S_5$ is an independent set.
		\end{claim}
	
		\begin{proof}
			Suppose that $x,y\in S_5$ and $xy\in E$. Then $G$ contains $F_1$, a contradiction.
		\end{proof}

		\begin{claim}\label{coloring number}
			For each $1\le i\le 5$, some properties of $G$ are as follows:
			\begin{itemize}
				\item $\chi(G[S_3(i)])\le 2$.
				\item $\chi(G[S_2(i)\cup S_3(i)])\le 3$.
				\item $\chi(G[S_4(i)])\le 2$.
				\item $\chi(G[S_5\cup S_0])\le 4$.
			\end{itemize}
			
		\end{claim}
		
		\begin{proof}
			It suffices to prove for $i=1$. Suppose that $\chi(G[S_3(1)])\ge3$. Then $\chi(G-v_3)\ge 5$, contradicting that $G$ is 5-vertex-critical. So $\chi(G[S_3(1)])\le2$. Similarly, We can prove the other three properties.
		\end{proof}
		
		We first bound $S_0$.
		\begin{claim}\label{S0}
			{$N(S_0)\subseteq S_5$}.
		\end{claim}
		
		\begin{proof}
			Let $x\in N(S_0)$ and $y\in S_0$ be a neighbor of $x$. Then we show that $x\in S_5$. Let $1\le i\le5$. If $x\in S_2(i)\cup S_3(i)$, then $\{y,x,v_{i+1},v_{i+2},v_{i+3}\}$ induces a $P_5$. If $x\in S_4(i)$, then $\{v_i,v_{i+1},v_{i+2},x,y\}$ induces a bull. Therefore, $y\notin S_2\cup S_3\cup S_4$. It follows that $y\in S_5$.
		\end{proof}
		
		\begin{claim}\label{S0 color}
			If A is a component of $G[S_0]$, then $\chi(A)=4$.
		\end{claim}
		
		\begin{proof}
			By \autoref{coloring number}, $\chi(A)\le4$. Suppose that $\chi(A)\le 3$. So $\chi(C)\ge \chi(A)$. Combined with the fact that $C$ is anticomplete to $A$, we know that $C$ is not complete to $N(A)$ by \autoref{lem:XY}. This contradicts the facts that $C$ is complete to $S_5$ and $N(A)\subseteq S_5$. Thus $\chi(A)=4$.
		\end{proof}
		
		\begin{claim}\label{S0 connected}
			$G[S_0]$ is connected.
		\end{claim}
		
		\begin{proof}
			Suppose that there are two components $A_1$ and $A_2$ in $G[S_0]$. Since $G$ is connected, there must exist $w_1\in N(A_1)$ and so $w_1\in S_5$ by \autoref{S0}. By \autoref{coloring number}, $w_1$ cannot be complete to $A_1$ and $A_2$. So $w_1$ is mixed on an edge $x_1y_1\in E(A_1)$. Similarly, there exists $w_2\in S_5$ mixed on an edge $x_2y_2\in E(A_2)$ and not complete to $A_1$. So $w_2$ is anticomplete to $A_1$, otherwise if $w_2$ is mixed on an edge $z_1z_2\in E(A_1)$, then $\{z_1,z_2,w_2,x_2,y_2\}$ induces a $P_5$. It follows that $w_2$ is anticomplete to $\{x_1,y_1\}$. Then $\{y_1,x_1,w_1,v_1,w_2\}$ induces a $P_5$, a contradiction.
		\end{proof}
		
		By Claims \ref{S0 color}-\ref{S0 connected}, we obtain the following claim. 
		
		\begin{claim}\label{S0 4-chromatic}
			$G[S_0]$ is a connected 4-chromatic graph.
		\end{claim}
		
		\begin{claim}
			$N(S_0)=S_5$.
		\end{claim}
		
		\begin{proof}
			Suppose that $w_1\in S_5$ is anticomplete to $S_0$. Since $G$ is connected, there must exist $w_2\in S_5$, which is a neighbor of $S_0$. By \autoref{coloring number}, $w_2$ is not complete to $S_0$ and so mixed on an edge $xy$ in $G[S_0]$. Thus, $\{w_1,v_1,w_2,x,y\}$ induces a $P_5$, a contradiction.
		\end{proof}
		
		To bound $S_0$, we partition $S_0$ into two parts. Let $L=S_0\cap N(S_5)$ and $R=S_0\setminus L$. 
		
		\begin{claim}\label{L S5}
			If $R\neq\emptyset$, then (i)$L$ is complete to $S_5$; (ii)$N(R)=L$.
		\end{claim}
		
		\begin{proof}
			Let $L_i=\{l\in L|d(l,R)=i\}$, where $i\ge 1$. Let $l\in L_1$. There exists $r\in R$, which is adjacent to $l$. Let $u\in S_5$ be a neighbor of $l$. Note that if $|S_5|=1$, $S_5$ is a clique cutset of $G$, contradicting \autoref{lem:xy}. So $|S_5|\ge 2$. For each $u'\in S_5\setminus\{u\}$, $u'$ is adjacent to $l$, otherwise $\{r,l,u,v_1,u'\}$ induces a $P_5$. Hence, $L_1$ is complete to $S_5$. Let $l_2\in L_2$. By the definition of $L_2$, there must exist $l_1\in L_1$, $l_2$ is adjacent to $l_1$. Let $r_1\in R$ and $u_2\in S_5$ be the neighbor of $l_1$ and $l_2$, respectively. Since $d(l_2,R)=2$, $l_2r_1\notin E$. Since $L_1$ is complete to $S_5$, $l_1u_2\in E$. Thus $\{v_1,u_2,l_2,l_1,r_1\}$ induces a bull, a contradiction. So $L_2=\emptyset$ and thus $L_i=\emptyset$ for each $i\ge 3$. Then $L=L_1$. Therefore, $L$ is complete to $S_5$ and $N(R)=L$.
		\end{proof}
		
		\begin{claim}\label{LR components}
			Let $L'$ and $R'$ be components of $G[L]$ and $G[R]$, respectively. Then $L'$ is complete or anticomplete to $R'$.
		\end{claim}
		
		\begin{proof}
			Let $u\in S_5$. By \autoref{L S5}, $u$ is complete to $L'$. Assume $L'$ is not anticomplete to $R'$. We show that $L'$ is complete to $R'$ in the following. Let $l_1\in V(L')$ and $r_1\in V(R')$ be adjacent. If $l_1$ is mixed on $R'$, then $l_1$ must be mixed on an edge $x_1y_1$ in $R'$ and so $\{v_1,u,l_1,x_1,y_1\}$ induces a $P_5$, a contradiction. So $l_1$ is complete to $R'$. Suppose that $l_2\in V(L')$ is not complete to $R'$, then there exists $r_2\in V(R')$ not adjacent to $l_2$. Since $l_1r_2\in E$, $r_2$ is mixed on $L'$ and so mixed on an edge $x_2y_2$ in $L'$. Thus $\{v_1,u,x_2,y_2,r_2\}$ induces a bull, a contradiction. It follows that $L'$ is complete to $R'$.
		\end{proof}
		
		\begin{claim}\label{R finite}
			$|R|\le 8$.
		\end{claim}
		
		\begin{proof}
			Let $R'$ and $R''$ be two arbitrary components of $G[R]$. Let $u_1\in S_5$. If there exists $l_1,l_2\in L$ such that $l_1\in N(R')\setminus N(R'')$ and $l_2\in N(R'')\setminus N(R')$, then $\{u_1,l_1,l_2\}\cup R'\cup R''$ contains an induced bull or an induced $P_5$, depending on whether $l_1l_2\in E$. So $N(R')\subseteq N(R'')$ or $N(R'')\subseteq N(R')$. We may assume $N(R')\subseteq N(R'')$. By \autoref{LR components}, $R''$ is complete to $N(R')$. It follows from \autoref{lem:XY} that $\chi(R'')<\chi(R')$. By \autoref{S0 4-chromatic} and \autoref{LR components}, for each component of $G[R]$, there must exist a vertex in $L$ complete to this component. Since $G[S_0]$ is 4-chromatic, the chromatic number of components of $G[R]$ is at most 3. So there are at most three components $R_1,R_2$ and $R_3$ in $G[R]$. Assume that $\chi(R_1)=1,\chi(R_2)=2$ and $\chi(R_3)=3$. By \autoref{LR components} and the definition of $R$, we know that $R_1,R_2$ and $R_3$ are all homogeneous. By \autoref{lem:homogeneous}, we know that $|R_1|=1$, $|R_2|=2$ and $|R_3|\le 5$. Therefore, $|R|\le 8$.
		\end{proof}
		
		\begin{claim}\label{L finite}
			If $R\neq \emptyset$, then $|L|\le 8$.
		\end{claim}
		
		\begin{proof}
			Let $L'$ and $L''$ be two arbitrary components of $G[L]$. By \autoref{L S5}, $L',L''\subseteq N(R)$. Let $u_1\in S_5$. By \autoref{L S5}, \autoref{LR components} and \autoref{coloring number}, each component of $G[L]$ must be complete to some component of $G[R]$ and so $\chi(G[L])\le 3$. Suppose that there exists $r_1,r_2\in R$ such that $r_1\in N(L')\setminus N(L'')$ and $r_2\in N(L'')\setminus N(L')$. Then $r_1$ and $r_2$ belong to different components of $R$ by \autoref{LR components}. So $r_1r_2\notin E$. Then $\{u_1,r_1,r_2\}\cup L'\cup L''$ contains an induced $P_5$, a contradiction. Combined with \autoref{L S5}, we know that $N(L')\subseteq N(L'')$ or $N(L'')\subseteq N(L')$. We may assume $N(L')\subseteq N(L'')$. By \autoref{LR components}, $L''$ is complete to $N(L')$. It follows from \autoref{lem:XY} that $\chi(L'')<\chi(L')$. Note that $\chi(G[L])\le 3$. So there are at most three components $L_1,L_2$ and $L_3$ in $G[L]$. Assume that $\chi(L_1)=1,\chi(L_2)=2$ and $\chi(L_3)=3$. By \autoref{LR components} and \autoref{L S5}, we know that $L_1,L_2$ and $L_3$ are all homogeneous. By \autoref{lem:homogeneous}, we know that $|L_1|=1$, $|L_2|=2$ and $|L_3|\le 5$. Therefore, $|L|\le 8$.
		\end{proof}
		
		By Claims \ref{R finite}-\ref{L finite}, we obtain the following claim.
		
		\begin{claim}\label{cla:S0 1}
			If $R\neq\emptyset$, $|S_0|\le 16$.
		\end{claim}
		
		Next, we bound $S_0$ when $R=\emptyset$.
		
		\begin{claim}\label{cla:S0 2}
			If $R=\emptyset$, then $|S_0|\le 13$.
		\end{claim}
		
		\begin{proof}
			Since $R=\emptyset$, $S_0\subseteq N(S_5)$. For each $v\in S_0$, $\chi(G-v)=4$ since $G$ is 5-vertex-critical. Let $\pi$ be a 4-coloring of $G-v$. By the fact that $\chi(C)=3$ and $S_5$ is complete to $C$, all vertices in $S_5$ must be colored with the same color in $\pi$. Since $S_0\subseteq N(S_5)$, the vertices in $S_0\setminus\{v\}$ must be colored with the remaining three colors, i.e., $\chi(G[S_0]-v)\le 3$. Combined with \autoref{S0 4-chromatic}, $G[S_0]$ is a $P_5$-free 4-vertex-critical graph. By \autoref{thm:finite4Critical}, $|S_0|\le 13$.
		\end{proof}
		
		By Claims \ref{cla:S0 1}-\ref{cla:S0 2}, $|S_0|\le 16$. Next, we bound $S_5$.
		
		\begin{claim}\label{S4(i)S5}
			For at most one value of $i$, where $1\le i\le 5$, $S_4(i)$ is not anticomplete to $S_5$.
		\end{claim}
		
		\begin{proof}
			Suppose that $S_4(i)$ and $S_4(j)$ are not anticomplete to $S_5$, where $1\le i<j\le5$. Then $G$ must have a subgraph isomorphic to $F_2,F_3,F_4$ or $F_5$, a contradiction.
		\end{proof}
		
		\begin{claim}
			$|S_5|\le 2^{16}$.
		\end{claim}
		
		\begin{proof}
			Suppose that $|S_5|> 2^{|S_0|}$. By the pigeonhole principle, there are two vertices $u,v\in S_5$ that have the same neighborhood in $S_0$. Since $u$ and $v$ are not comparable, there exists $x\in N(u)\setminus N(v)$ and $y\in N(v)\setminus N(u)$. Clearly, $x,y\in S_3\cup S_4(i)$ by \autoref{S4(i)S5} and \ref{S2S5}, for some $1\le i\le 5$. By symmetry, we assume $i=1$. 
			
			Suppose that $x,y\in S_4(1)$. Then $xy\notin E$, otherwise $G$ has a subgraph isomorphic to $F_8$. So $\{x,u,v_1,v,y\}$ induces a $P_5$, a contradiction.
			
			Suppose that $x,y\in S_3$. Without loss of generality, we assume $x\in S_3(1)$. If $y\in S_3(3)\cup S_3(4)$, $G$ must have a subgraph isomorphic to $F_7$, a contradiction. If $y\in S_3(2)\cup S_3(5)$, then $xy\in E$ by \ref{S3(i)S3(i+1)} and so $G$ contains $F_8$, a contradiction. If $y\in S_3(1)$, then $xy\notin E$, otherwise $G$ has a subgraph isomorphic to $F_6$. Then $\{x,u,v_3,v,y\}$ induces a $P_5$, a contradiction. 
			
			So we assume that $x\in S_4(1)$ and $y\in S_3$. If $y\in S_3(1)\cup S_3(2)\cup S_3(5)$, then $G$ has a subgraph isomorphic to $F_7$, a contradiction. Thus $y\in S_3(3)\cup S_3(4)$. From \ref{S2(i)S4(i+2)} we know that $xy\in E$. Note that $G$ has a subgraph isomorphic to $F_8$, a contradiction.
			
			Therefore, $|S_5|\le 2^{|S_0|}\le 2^{16}$.
		\end{proof}
		
		Next, we bound $S_2$. By \ref{S2(i)S2(i+1)}-\ref{S2S5} and \autoref{S0}, for each $1\le i\le 5$, all vertices in $V\setminus S_2(i)$  are complete or anticomplete to $S_2(i)$, except those in $S_3(i)$. So we divide $S_2(i)$ into two parts. Let $R(i)=S_2(i)\cap N(S_3(i))$ and $L(i)=S_2(i)\setminus R(i)$.
		
		\begin{claim}\label{P3 conclusion1}
			If $G[R(i)]$ contains a $P_3$, then the two endpoints of the $P_3$ have the same neighborhood in $S_3(i)$.
		\end{claim}
		
		\begin{proof}
			Let $uvw$ be a $P_3$ contained in $R(i)$. Let $u'\in S_3(i)$ be a neighbor of $w$. Then $uu'\in E$, otherwise $\{u,v,w,u',v_i\}$ induces a bull or a $P_5$, depending on whether $vu'\in E$. So $N_{S_3(i)}(w)\subseteq N_{S_3(i)}(u)$. Similarly,  $N_{S_3(i)}(u)\subseteq N_{S_3(i)}(w)$. Therefore, $u$ and $w$ have the same neighborhood in $S_3(i)$. 
		\end{proof}
		
		\begin{claim}\label{L(i)}
			$|L(i)|\le 8$.
		\end{claim}
		
		\begin{proof}
			If $S_3(i)=\emptyset$ or $R(i)=\emptyset$, then $S_2(i)$ is homogeneous. If there are two components $X$ and $Y$ in $G[S_2(i)]$, then $Y$ is complete to $N(X)$ and $X$ is complete to $N(Y)$, contradicting \autoref{lem:XY}. So $G[S_2(i)]$ is connected. By \autoref{coloring number} and \autoref{lem:homogeneous}, $G[S(i)]$ is a $K_1$, a $K_2$, a $K_3$ or a $C_5$. Thus $|L(i)|\le 5$. 
			
			So we assume that $S_3(i)\neq \emptyset$ and $R(i)\neq \emptyset$. Let $u$ be an arbitrary vertex in $R(i)$ and $u'$ be its neighbor in $S_3(i)$. Then $u$ is not mixed on any edge $xy$ in $L(i)$, otherwise $\{y,x,u,u',v_i\}$ induces a $P_5$. Then $u$ is complete or anticomplete to any component of $L(i)$ and so all components of $L(i)$ are homogeneous. By \autoref{lem:homogeneous}, each component of $L(i)$ is a $K_1$, a $K_2$, a $K_3$ or a $C_5$. 
			
			We show that there is at most one 3-chromatic component in $L(i)$. Suppose that $X_1$ and $Y_1$ are two 3-chromatic components in $L(i)$. Note that $X_1$ and $Y_1$ are homogeneous. Since $\chi(G[S_2(i)])\le 3$, $X_1$ and $Y_1$ are anticomplete to $R(i)$. So $Y_1$ is complete to $N(X_1)$ and $X_1$ is complete to $N(Y_1)$, which contradicts \autoref{lem:XY}. So, there is at most one 3-chromatic component in $L(i)$.
			
			Then we show that there is at most one $K_2$-component in $L(i)$. Suppose that $X_2=x_1y_1$ and $Y_2=x_2y_2$ are two $K_2$-components in $L(i)$. Note that $X_2$ and $Y_2$ are homogeneous. By \autoref{lem:XY}, there must exist $u_1,u_2\in R(i)$ such that $u_1$ is complete to $X_2$ and anticomplete to $Y_2$ and $u_2$ is complete to $Y_2$ and anticomplete to $X_2$ . Let $u_1',u_2'\in S_3(i)$ be the neighbor of $u_1$ and $u_2$, respectively. Clearly, $u_1'$ and $u_2'$ are not the same vertex, otherwise $\{x_1,u_1,u_1',u_2,x_2\}$ induces a bull or a $P_5$, depending on whether $u_1u_2\in E$. So $u_1'u_2\notin E$ and $u_2'u_1\notin E$. It follows that $u_1u_2\notin E$, otherwise $\{x_2,u_2,u_1,u_1',v_i\}$ induces a $P_5$. Then $\{u_1,u_1',v_i,u_2',u_2\}$ induces a bull or a $P_5$, depending on whether $u_1'u_2'\in E$, a contradiction. So, there is at most one $K_2$-component in $L(i)$.
			
			Similarly, there is at most one $K_1$-component in $L(i)$. It follows that $|L(i)|\le 8$. The proof is completed.
		\end{proof}
		
		\begin{figure}[h]
			\centering
			\begin{tikzpicture}[scale=0.8]
				\tikzstyle{vertex}=[draw, circle, fill=black!100, minimum width=1pt,inner sep=1pt]
				
				\node[vertex,label=below:$v$] (v) at (1.5,0) {};
				\node[vertex,label=left:$u$] (u) at (0,1) {};
				\node[vertex,label=right:$w$] (w) at (3,1) {};
				\node[vertex,label=left:$s$] (s) at (0,2.5){};
				\node[vertex,label=right:$t$] (t) at (3,2.5) {};
				\draw (s)--(u)--(v)--(s) (v)--(w)--(t)--(v);
				
			\end{tikzpicture}
			
			\caption{The graph contained in $G[R(i)]$.}
			\label{fig:uvwst}
		\end{figure}
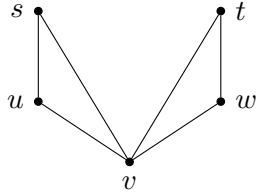
		
		\begin{claim}\label{P3 conclusion2}
			If $G[R(i)]$ contains $P_3=uvw$, then $G[R(i)]$ must contain the graph induced by $\{u,v,w,s,t\}$ in \autoref{fig:uvwst}. Moreover, $u,w,s$ and $t$ have the same neighborhood in $S_3(i)$ and $N_{S_3(i)}(u)\cap N_{S_3(i)}(v)=\emptyset$.
		\end{claim}
		
		\begin{proof}
			Let $u'$ be an arbitrary neighbor of $w$ in $S_3(i)$. By \autoref{P3 conclusion1} we know that $N_{S_3(i)}(u)=N_{S_3(i)}(w)$ and so $uu'\in E$. Since $u$ and $w$ are not comparable, there must exist $s\in N(u)\setminus N(w)$ and $t\in N(w)\setminus N(u)$. Clearly, $s,t\in L(i)\cup R(i)$.
			
			\noindent{\bf Case 1. }$s,t\in L(i)$. Then $st\notin E$, otherwise $\{s,t,w,u',v_i\}$ induces a $P_5$. Moreover, $sv\notin E$, otherwise $\{s,v,w,u',v_i\}$ induces a bull or a $P_5$, depending on whether $vu'\in E$. Similarly, $tv\notin E$. So $\{s,u,v,w,t\}$ induces a $P_5$, a contradiction.
			
			\noindent{\bf Case 2. }One vertex of $\{s,t\}$ belongs to $L(i)$ and the other belongs to $R(i)$. We assume that $s\in L(i)$ and $t\in R(i)$. Then $sv\notin E$, otherwise $\{s,v,w,u',v_i\}$ induces a bull or a $P_5$, depending on whether $vu'\in E$. So $vu'\notin E$, otherwise $\{s,u,v,u',v_i\}$ induces a bull. Let $z'$ be a neighbor of $v$ in $S_3(i)$. Clearly, $\{s,u,v,z',v_i\}$ induces a bull or a $P_5$, depending on whether $uz'\in E$, a contradiction.
			
			\noindent{\bf Case 3. }$s,t\in R(i)$. Suppose that $sv\notin E$. Then $suv$ is a $P_3$ and so $u'$ is complete or anticomplete to $\{s,v\}$ by \autoref{P3 conclusion1}. Suppose that $u'$ is complete to $\{s,v\}$. If $vt\in E$, then $uvt$ is a $P_3$ and so $tu'\in E$ by \autoref{P3 conclusion1}. Then $\{t,v,w,u'\}$ induces a $K_4$, contradicting that $\chi(G[S_2(i)\cup S_3(i)])\le 3$. So $vt\notin E$. Hence $vwt$ is a $P_3$ and then $tu'\in E$ by \autoref{P3 conclusion1}. Then $st\in E$, otherwise $\{s,u,v,w,t\}$ induces a $P_5$. It is easy to verify that $\{s,u,v,w,t,u'\}$ induces a 4-chromatic subgraph, contradicting that $\chi(G[S_2(i)\cup S_3(i)])\le 3$. So $u'$ must be anticomplete to $\{s,v\}$. Then $st\notin E$, otherwise $\{s,t,w,u',v_i\}$ induces a bull or a $P_5$, depending on whether $tu'\in E$. Hence $tv\in E$, otherwise $\{s,u,v,w,t\}$ induces a $P_5$. Let $z'$ be an arbitrary neighbor of $v$ in $S_3(i)$. Since $suv$ is a $P_3$, $sz'\in E$ by \autoref{P3 conclusion1}. Note that $uvt$ and $uvw$ are all $P_3$ and so $N_{S_3(i)}(u)=N_{S_3(i)}(w)=N_{S_3(i)}(t)$. Then $tz'\notin E$, otherwise $\{t,v,z',w\}$ induces a $K_4$. Note that $\{s,z',v_i,u',w\}$ induces a bull or a $P_5$, depending on whether $u'z'\in E$, a contradiction. Thus $sv\in E$. By symmetry, $tv\in E$. 
			
			Since $svw$ and $uvt$ are all $P_3$, we know that $u,w,s,t$ have the same neighborhood in $S_3(i)$ by \autoref{P3 conclusion1} and so $su',tu'\in E$. Then $vu'\notin E$, otherwise $\{v,w,t,u'\}$ induces a $K_4$. Since $u'$ is an arbitrary neighbor of $w$ in $S_3(i)$, $v$ is anticomplete to $N_{S_3(i)}(u)$. Thus $N_{S_3(i)}(u)\cap N_{S_3(i)}(v)=\emptyset$. 
			
			If $st\in E$, then $ust$ is a $P_3$. From the above proof we know that $s$ is anticomplete to $N_{S_3(i)}(u)$, which contradicts the fact that $su'\in E$. So $st\notin E$. It follows that $\{u,v,w,s,t\}$ induces the graph in \autoref{fig:uvwst}. This completes the proof of the claim.
		\end{proof}
		
		\begin{claim}\label{P3-free}
			$G[R(i)]$ is $P_3$-free.
		\end{claim}
		
		\begin{proof}
			Suppose that $G[R(i)]$ contains a $P_3=uvw$. By \autoref{P3 conclusion2}, $G[R(i)]$ contains a subgraph in \autoref{fig:uvwst} induced by $\{u,v,w,s,t\}$. Moreover, $u,w,s,t$ have the same neighborhood in $S_3(i)$ and $v$ is anticomplete to $N_{S_3(i)}(u)$. Let $u'$ and $v'$ be arbitrary neighbor of $u$ and $v$ in $S_3(i)$, respectively. Then $u'$ is complete to $\{u,w,s,t\}$ and nonadjacent to $v$ and $v'$ is anticomplete to $\{u,w,s,t\}$. It follows from \autoref{lem:XY} that $\{w,t\}$ is not complete to $N\{u,s\}$. So there exists $a\in N\{u,s\}$ such that $a$ is not complete to $\{w,t\}$. Clearly, $a\in L(i)\cup R(i)$. 
			
			Suppose $a\in L(i)$. Assume that $as\in E$. So $au\in E$, otherwise $\{a,s,u,u',v_i\}$ induces a bull. Then $av\in E$, otherwise $\{a,u,v,v',v_i\}$ induces a $P_5$. Note that $\{a,s,v,u\}$ induces a $K_4$, a contradiction. Thus $a\in R(i)$.
			
			If $a$ is adjacent to only one vertex in $\{s,u\}$, then either $usa$ or $sua$ is a $P_3$ and so $N_{S_3(i)}(s)\cap N_{S_3(i)}(u)=\emptyset$ by \autoref{P3 conclusion2}, contradicting that $su',uu'\in E$. Thus $a$ is complete to $\{s,u\}$. Then $av\notin E$, otherwise $\{s,u,a,v\}$ induces a $K_4$. Because $auv$ is a $P_3$, we know that $au'\notin E$ and $av'\in E$ by \autoref{P3 conclusion2}. Since $a$ is not complete to $\{w,t\}$, we assume that $at\notin E$ by symmetry. Note that $\{t,u',v_i,v',a\}$ induces a bull or a $P_5$, depending on whether $u'v'\in E$, a contradiction.
			
			Therefore, $G[R(i)]$ is $P_3$-free.
		\end{proof}
		
		Since $G[R(i)]$ is $P_3$-free, $G[R(i)]$ is a disjoint union of cliques. By \autoref{coloring number}, each component of $G[R(i)]$ is a $K_1$, a $K_2$ or a $K_3$. We next prove that the number of them is finite.
		
		\begin{claim}\label{R(i) 1}
			There are at most $2^{|L(i)|}$ $K_1$-components and 5 $K_2$-components in $G[R(i)]$.
		\end{claim}
		
		\begin{proof}
			We first show that there are at most $2^{|L(i)|}$ $K_1$-components in $G[R(i)]$. Suppose there are more than $2^{|L(i)|}$ $K_1$-components in $G[R(i)]$. By the pigeonhole principle, there exists $u,v\in R(i)$ and they have the same neighborhood in $L(i)$. Since $u$ and $v$ are not comparable, there exists $u',v'\in S_3(i)$ such that $u'\in N(u)\setminus N(v)$ and $v'\in N(v)\setminus N(u)$. Then $\{u,u',v_i,v',v\}$ induces a bull or a $P_5$, depending on whether $u'v'\in E$, a contradiction. So there are at most $2^{|L(i)|}$ $K_1$-components in $G[R(i)]$.
			
			Next we show that there are at most 5 $K_2$-components in $G[R(i)]$.
			
			Suppose that $A_1$ and $A_2$ are two homogeneous $K_2$-components of $G[R(i)]$. By \autoref{lem:XY}, there exists $x_1\in N(A_1)\setminus N(A_2)$ and $y_1\in N(A_2)\setminus N(A_1)$. Clearly, $x_1,y_1\in S_3(i)\cup L(i)$. Suppose that $x_1,y_1\in L(i)$. Let $w_1,w_2\in S_3(i)$ be the neighbor of $A_1$ and $A_2$, respectively. If $x_1y_1\in E$, then $\{y_1,x_1,w_1,v_i\}\cup A_1$ contains an induced $P_5$. So $x_1y_1\notin E$. Note that $w_2\notin N(A_1)$, otherwise $\{w_2,x_1,y_1\}\cup A_1\cup A_2$ contains an induced $P_5$. Similarly, $w_1\notin N(A_2)$. Then $\{v_i,w_1,w_2\}\cup A_1\cup A_2$ contains an induced bull or an induced $P_5$, depending on whether $w_1w_2\in E$, a contradiction. Suppose that $x_1\in L(i)$ and $y_1\in S_3(i)$. Let $w_3$ be the neighbor of $A_1$ in $S_3(i)$. Note that $w_3\in N(A_2)$, otherwise $\{v_i,w_3,y_1\}\cup A_1\cup A_2$ contains an induced bull or an induced $P_5$, depending on whether $w_3y_1\in E$. Then $w_3y_1\in E$, otherwise $\{x_1,y_1,w_3\}\cup A_1\cup A_2$ contains an induced $P_5$. Then $\{w_3,y_1\}\cup A_2$ induces a $K_4$, contradicting that $\chi(G[S_2(i)\cup S_3(i)])\le 3$. So $x_1,y_1\in S_3(i)$ and then $\{v_i,x_1,y_1\}\cup A_1\cup A_2$ contains an induced bull or an induced $P_5$, depending on whether $x_1y_1\in E$, a contradiction. Thus there is at most one homogeneous $K_2$-component in $G[R(i)]$.
			
			Let $B_1=x_3y_3$ and $B_2=x_4y_4$ be two arbitrary nonhomogeneous $K_2$-components of $G[R(i)]$ and the vertices mixed on $B_1$ or $B_2$ are clearly in $L(i)\cup S_3(i)$. Suppose that each vertex in $S_3(i)$ is complete or anticomplete to $B_1$, then there exists $z'\in L(i)$ mixed on $B_1$. Let $t\in S_3(i)$ be complete to $B_1$, then $\{z',x_3,y_3,t,v_i\}$ induces a bull, a contradiction. So there must exist $z_3\in S_3(i)$ mixed on $B_1$. Similarly, there exists $z_4\in S_3(i)$ mixed on $B_2$. By symmetry, we assume $z_3x_3,z_4x_4\in E$ and $z_3y_3,z_4y_4\notin E$. Then $z_3$ is complete or anticomplete to $B_2$, otherwise $\{y_3,x_3,z_3,x_4,y_4\}$ induces a $P_5$. Similarly, $z_4$ is complete or anticomplete to $B_1$. If $z_3$ is anticomplete to $B_2$ and $z_4$ is anticomplete to $B_1$, then $\{x_3,z_3,v_i,z_4,x_4\}$ induces a bull or a $P_5$, depending on whether $z_3z_4\in E$. If $z_3$ is complete to $B_2$ and $z_4$ is complete to $B_1$, then $\{y_3,z_4,v_i,z_3,y_4\}$ induces a bull or a $P_5$, depending on whether $z_3z_4\in E$. So we assume $z_3$ is anticomplete to $B_2$ and $z_4$ is complete to $B_1$. It follows that $z_3z_4\in E$, otherwise $\{y_4,x_4,z_4,v_i,z_3\}$ induces a $P_5$. So there are at most 4 nonhomogeneous $K_2$-components in $R(i)$, otherwise the vertices in $S_3(i)$ mixed on them respectively can induce a $K_5$, a contradiction.
			
			The above proof shows that there are at most $2^{|L(i)|}$ $K_1$-components and 5 $K_2$-components in $G[R(i)]$.
		\end{proof}
		
		\begin{claim}\label{R(i) 2}
			There is at most one $K_3$-component in $G[R(i)]$.
		\end{claim}
		
		\begin{proof}
			Suppose that $T_1=x_1y_1z_1,T_2=x_2y_2z_2$ are two arbitrary $K_3$-components of $G[R(i)]$. Let $x',y'\in S_3(i)$ be the neighbor of $T_1$ and $T_2$, respectively. Since $\chi(G[S_2(i)\cup S_3(i)])\le 3$, $x'$ is mixed on $T_1$ and $y'$ is mixed on $T_2$. By symmetry, we assume that $x'x_1,y'x_2\in E$ and $x'y_1,y'y_2\notin E$. So $x'$ is not mixed on $T_2$, otherwise $\{y_1,x_1,x'\}\cup T_2$ contains an induced $P_5$. Moreover, since $\chi(G[S_2(i)\cup S_3(i)])\le 3$, $x'$ is not complete to $T_2$. Thus $x'$ is anticomplete to $T_2$. Similarly, $y'$ is anticomplete to $T_1$. Then $\{x_1,x',v_i,y',x_2\}$ induces a bull or a $P_5$, depending on whether $x'y'\in E$, a contradiction.
			
			Therefore, there is at most one $K_3$-component in $G[R(i)]$.
		\end{proof}
		
		By Claims \ref{L(i)}, \ref{R(i) 1} and \ref{R(i) 2}, $|L(i)|\le 8$ and $|R(i)|\le 2^{|L(i)|}+13$. So $|S_2|\le 5\times(2^8+21)$.
		
		Finally, we bound $S_3$ and $S_4$.
		
		\begin{claim}\label{S3 trivial}
			For each $1\le i\le 5$, the number of $K_1$-components in $G[S_3(i)]$ is not more than $2^{|S_2(i)\cup S_5|}$.
		\end{claim}
		
		\begin{proof}
			It suffices to prove for $i=1$. Suppose that the number of $K_1$-components in $G[S_3(1)]$ is more than $2^{|S_2(1)\cup S_5|}$. The pigeonhole principle shows that there are two $K_1$-components $u,v$ having the same neighborhood in $S_2(1)\cup S_5$. Since $u$ and $v$ are not comparable, there must exist $u'\in N(u)\setminus N(v)$ and $v'\in N(v)\setminus N(u)$. By \ref{S2(i)S2(i+1)}, \ref{S2(i)S3(i+2)}, \ref{S3(i)S3(i+1)} and \ref{S2(i)S4(i+2)}, $u',v'\in S_3(3)\cup S_3(4)\cup S_4(1)\cup S_4(2)\cup S_4(5)$. So $\{u,u',v_3,v',v\}$ induces a bull or a $P_5$, depending on whether $u'v'\in E$, a contradiction.
		\end{proof}
		
		\begin{claim}\label{S4 trivial}
			For each $1\le i\le 5$, the number of $K_1$-components in $G[S_4(i)]$ is not more than $2^{|S_5|}$.
		\end{claim}
		
		\begin{proof}
			It suffices to prove for $i=1$. Suppose that the number of $K_1$-components in $G[S_4(1)]$ is more than $2^{|S_5|}$. The pigeonhole principle shows that there are two $K_1$-components $u,v$ having the same neighborhood in $S_5$. Since $u$ and $v$ are not comparable, there must exist $u'\in N(u)\setminus N(v)$ and $v'\in N(v)\setminus N(u)$. By \ref{S2(i)S4(i)}, \ref{S2(i)S4(i+2)} and \ref{S2S5}, $u',v'\in (\cup_{i=1,2,5}S_3(i))\cup (\cup_{2\le i\le 5}S_4(i))$. So $\{u,u',v_1,v',v\}$ induces a bull or a $P_5$, depending on whether $u'v'\in E$, a contradiction.
		\end{proof}
		
		\begin{claim}\label{S4(i) 2-chromatic}
			If $\chi(S_4(i))=2$ for some $1\le i\le 5$, then $S_3\cup S_4$ is bounded.
		\end{claim}
		
		\begin{proof}
			Without loss of generality, we assume $\chi(S_4(1))=2$. It follows from \ref{S2(i)S4(i+2)} that $S_3(3)=S_3(4)=\emptyset$, otherwise $S_4(1)\cup S_3(3)\cup \{v_3,v_4\}$ contains an induced $K_5$. Since $G$ has no subgraph isomorphic to $F_9$, $\chi(S_4(i))\le 1$ for each $2\le i\le 5$ and $\chi(S_3(j))\le 1$ for each $j=1,2,5$. By Claims \ref{S3 trivial}-\ref{S4 trivial}, $S_3\cup(\cup_{2\le i\le 5}S_4(i))$ is bounded and the number of $K_1$-components in $G[S_4(1)]$ is also bounded.
			
			We now show that the number of vertices in a 2-chromatic component of $G[S_4(1)]$ is bounded. Let $A$ be a 2-chromatic component of $G[S_4(1)]$ and so $A$ is bipartite. Let the bipartition of $A$ be $(X,Y)$. Suppose that $|X|>2^{|S_3\cup (\cup_{2\le i\le 5}S_4(i))\cup S_5|}$. By the pigeonhole principle, there exists two vertices $x_1,x_2\in X$ which have the same neighborhood in $S_3\cup (\cup_{2\le i\le 5}S_4(i))\cup S_5$. Since $x_1$ and $x_2$ are not comparable, there must exist $y_1\in N(x_1)\setminus N(x_2),y_2\in N(x_2)\setminus N(x_1)$. Clearly, $y_1,y_2\in Y$ and so $\{x_1,x_2,y_1,y_2\}$ induces a $2K_2$ in $A$. Since $A$ is connected and bipartite, $A$ contains a $P_5$ by \autoref{2K2}, a contradiction. Thus $|X|\le 2^{|S_3\cup (\cup_{2\le i\le 5}S_4(i))\cup S_5|}$. Similarly, $|Y|\le 2^{|S_3\cup (\cup_{2\le i\le 5}S_4(i))\cup S_5|}$. Thus the number of vertices in $A$ is bounded.
			
			Then we show that there are at most five 2-chromatic components in $G[S_4(1)]$. 
			
			Suppose that $A_1$ and $A_2$ are two homogeneous 2-chromatic components of $G[S_4(1)]$. By \autoref{lem:XY}, $A_1$ is not complete to $N(A_2)$ and $A_2$ is not complete to $N(A_1)$. So there must exist $z_1\in N(A_1)\setminus N(A_2)$ and $z_2\in N(A_2)\setminus N(A_1)$. Clearly, $z_1,z_2\in (\cup_{i=1,2,5}S_3(i))\cup (\cup_{2\le i\le 5}S_4(i))\cup S_5$. Then $\{v_1,z_1,z_2\}\cup A_1\cup A_2$ contains an induced bull or an induced $P_5$, depending on whether $z_1z_2\in E$, a contradiction. Thus there is at most one homogeneous 2-chromatic component in $G[S_4(1)]$.
			
			Let $B_1,B_2$ be two nonhomogeneous 2-chromatic components of $G[S_4(1)]$. So there exists $x'$ mixed on $B_1$ and $y'$ mixed on $B_2$. Let $x'$ be mixed on edge $x_3y_3$ in $B_1$ and $y'$ be mixed on edge $x_4y_4$ in $B_2$. By symmetry, assume that $x'x_3,y'x_4\in E$ and $x'y_3,y'y_4\notin E$. It is evident that $x'$ and $y'$ are not the same vertex, otherwise $\{y_3,x_3,x',x_4,y_4\}$ induces a $P_5$. Similarly, $x'$ is not mixed on $x_4y_4$ and $y'$ is not mixed on $x_3y_3$. Clearly, $x',y'\in (\cup_{i=1,2,5}S_3(i))\cup (\cup_{2\le i\le 5}S_4(i))\cup S_5$. If $x'$ is anticomplete to $\{x_4,y_4\}$ and $y'$ is anticomplete to $\{x_3,y_3\}$, then $\{x_3,x',v_1,y',x_4\}$ induces a bull or a $P_5$, depending on whether $x'y'\in E$. If $x'$ is complete to $\{x_4,y_4\}$ and $y'$ is complete to $\{x_3,y_3\}$, then $\{y_4,x',v_1,y',y_3\}$ induces a bull or a $P_5$, depending on whether $x'y'\in E$. So we assume that $x'$ is complete to $\{x_4,y_4\}$ and $y'$ is anticomplete to $\{x_3,y_3\}$. Then $x'y'\in E$, otherwise $\{y',x_4,y_4,x',x_3\}$ induces a bull. So the number of nonhomogeneous 2-chromatic components of $G[S_4(1)]$ is not more than 4, otherwise the vertices mixed on them respectively can induce a $K_5$. 
			
			So there are at most five 2-chromatic components in $G[S_4(1)]$. It follows that $S_3\cup S_4$ is bounded.
		\end{proof}
		
		\begin{claim}\label{S3(i) 2-chromatic}
			If $\chi(S_3(i))=2$ for some $1\le i\le 5$, then $S_3\cup S_4$ is bounded.
		\end{claim}
		
		\begin{proof}
			Without loss of generality, we assume $\chi(S_3(3))=2$. It follows from \ref{S3(i)S3(i+1)} that $S_3(2)=S_3(4)=\emptyset$, otherwise $S_3(3)\cup S_3(2)\cup \{v_2,v_3\}$ or $S_3(3)\cup S_3(4)\cup \{v_4,v_3\}$ contains an induced $K_5$. Similarly, it follows from \ref{S2(i)S4(i+2)} that $S_4(1)=S_4(5)=\emptyset$. Since $G$ has no subgraph isomorphic to $F_9$, $\chi(S_4(i))\le 1$ for each $2\le i\le 4$ and $\chi(S_3(j))\le 1$ for each $j=1,5$. By Claims \ref{S3 trivial}-\ref{S4 trivial}, $(\cup_{i=1,5}S_3(i))\cup S_4$ is bounded and the number of $K_1$-components in $G[S_3(3)]$ is also bounded.
			
			We now show that the number of vertices in a 2-chromatic component of $G[S_3(3)]$ is bounded. Let $A$ be a 2-chromatic component of $G[S_3(3)]$ and so $A$ is bipartite. Let the bipartition of $A$ be $(X,Y)$. Suppose that $|X|>2^{|S_2(3)\cup S_5\cup (\cup_{i=1,5}S_3(i))\cup (\cup_{2\le i\le 4}S_4(i))|}$. By the pigeonhole principle, there exists two vertices $x_1,x_2\in X$ which have the same neighborhood in $S_2(3)\cup S_5\cup (\cup_{i=1,5}S_3(i))\cup (\cup_{2\le i\le 4}S_4(i))$. Since $x_1$ and $x_2$ are not comparable, there must exist $y_1\in N(x_1)\setminus N(x_2),y_2\in N(x_2)\setminus N(x_1)$. Clearly, $y_1,y_2\in Y$ and so $\{x_1,x_2,y_1,y_2\}$ induces a $2K_2$ in $A$. Since $A$ is connected and bipartite, $A$ contains a $P_5$ by \autoref{2K2}, a contradiction. Thus $|X|\le 2^{|S_2(3)\cup S_5\cup (\cup_{i=1,5}S_3(i))\cup (\cup_{2\le i\le 4}S_4(i))|}$. Similarly, 
\[|Y|\le 2^{|S_2(3)\cup S_5\cup (\cup_{i=1,5}S_3(i))\cup (\cup_{2\le i\le 4}S_4(i))|}.\] 
Thus the number of vertices in $A$ is bounded.
			
			Then we show that there are at most $(2^{|S_2(3)|}+4)$ 2-chromatic components in $G[S_3(3)]$. 
			
			Suppose that the number of homogeneous 2-chromatic components of $G[S_3(3)]$ is more than $2^{|S_2(3)|}$. By the pigeonhole principle, there are two 2-chromatic components $A_1,A_2$ such that $N_{S_2(3)}(A_1)=N_{S_2(3)}(A_2)$. By \autoref{lem:XY}, $A_1$ is not complete to $N(A_2)$ and $A_2$ is not complete to $N(A_1)$. So there must exist $z_1\in N(A_1)\setminus N(A_2)$ and $z_2\in N(A_2)\setminus N(A_1)$. Clearly, $z_1,z_2\in (\cup_{i=1,5}S_3(i))\cup (\cup_{2\le i\le 4}S_4(i))\cup S_5$. Then $\{v_1,z_1,z_2\}\cup A_1\cup A_2$ contains an induced bull or an induced $P_5$, depending on whether $z_1z_2\in E$, a contradiction. Thus there are at most $2^{|S_2(3)|}$ homogeneous 2-chromatic components in $G[S_3(3)]$.
			
			Let $B_1,B_2$ be two nonhomogeneous 2-chromatic components of $G[S_3(3)]$. So there exists $x'$ mixed on $B_1$ and $y'$ mixed on $B_2$. Let $x'$ be mixed on edge $x_3y_3$ in $B_1$ and $y'$ be mixed on edge $x_4y_4$ in $B_2$. By symmetry, assume that $x'x_3,y'x_4\in E$ and $x'y_3,y'y_4\notin E$. It is evident that $x'$ and $y'$ are not the same vertex, otherwise $\{y_3,x_3,x',x_4,y_4\}$ induces a $P_5$. Similarly, $x'$ is not mixed on $x_4y_4$ and $y'$ is not mixed on $x_3y_3$. Clearly, $x',y'\in S_2(3)\cup S_5\cup (\cup_{i=1,5}S_3(i))\cup (\cup_{2\le i\le 4}S_4(i))$.
			
			\noindent{\bf Case 1.} $x'$ is anticomplete to $\{x_4,y_4\}$ and $y'$ is anticomplete to $\{x_3,y_3\}$. Then $x'$ is nonadjacent to $y'$, otherwise $\{y_3,x_3,x',y',x_4,y_4\}$ induces a $P_6$. If $x',y'\notin S_2(3)$, then $\{x_3,x',v_1,y',x_4\}$ induces a $P_5$. If $x',y'\in S_2(3)$, then $\{x',x_3,v_3,x_4,y'\}$ induces a $P_5$. So assume $x'\in S_2(3)$ and $y'\notin S_2(3)$. Then $\{x_4,v_3,y_3,x_3,x'\}$ induces a bull, a contradiction. 
			
			\noindent{\bf Case 2.} $x'$ is complete to $\{x_4,y_4\}$ and $y'$ is anticomplete to $\{x_3,y_3\}$. Then $x'y'\in E$, otherwise $\{y',x_4,y_4,x',x_3\}$ induces a bull. So as the case when $x'$ is anticomplete to $\{x_4,y_4\}$ and $y'$ is complete to $\{x_3,y_3\}$.
			
			\noindent{\bf Case 3.} $x'$ is complete to $\{x_4,y_4\}$ and $y'$ is complete to $\{x_3,y_3\}$. Suppose that $x',y'\notin S_2(3)$ and so $\{y_4,x',v_1,y',y_3\}$ induces a bull or a $P_5$, depending on whether $x'y'\in E$, a contradiction. If $x',y'\in S_2(3)$, then $x'y'\in E$, otherwise $\{x',y_4,v_3,y_3,y'\}$ induces a $P_5$. If $x'\in S_2(3)$ and $y'\notin S_2(3)$, then $x'y'\in E$, otherwise $\{v_1,y',y_3,x_3,x'\}$ induces a bull.
			
			We now know that $x'$ must be adjacent to $y'$. So the number of nonhomogeneous 2-chromatic components of $G[S_3(3)]$ is not more than 4, otherwise the vertices mixed on them respectively can induce a $K_5$, a contradiction. It follows that there are at most $(2^{|S_2(3)|}+4)$ 2-chromatic components in $G[S_3(3)]$. 
			
			Therefore, $S_3\cup S_4$ is bounded.
		\end{proof}
		
		By Claims \ref{S3 trivial}-\ref{S3(i) 2-chromatic}, $S_3\cup S_4$ is bounded and so is $|G|$. This completes the proof of \autoref{th}.
	\end{proof}


\begin{thebibliography}{10}
		
		\bibitem{BM08}
		J.~A. Bondy and U.~S.~R. Murty.
		\newblock {\em Graph Theory}.
		\newblock Springer, 2008.
		
		\bibitem{BHS09}
		D.~Bruce, C.~T. Ho{\`a}ng, and J.~Sawada.
		\newblock A certifying algorithm for 3-colorability of {$P_5$}-free graphs.
		\newblock In {\em Proceedings of 20th International Symposium on Algorithms and Computation}, Lecture Notes in Computer Science 5878, pages 594--604, 2009.
		
		\bibitem{CGS}
		Q.~Cai, J.~Goedgebeur, and S.~Huang.
		\newblock Some results on $k$-critical $P_5$-free graphs. 
		\newblock arXiv:2108.05492v1 [math.CO].
		
		\bibitem{CHLS19}
		Q.~Cai, S.~Huang, T.~Li, and Y.~Shi.
		\newblock Vertex-critical ($P_5$,banner)-free graph.
		\newblock In Yijia Chen, Xiaotie Deng, and Mei Lu, editors, {\em Frontiers in Algorithmics -13th International Workshop, FAW 2019, Sanya, China, April 29-May 3, 2019, Proceedings}, volume 11458 of {\em Lecture Notes in Computer Science}, pages 111--120, 2019.
		
		\bibitem{CGHS21}
		K.~Cameron, J.~Goedgebeur, S.~Huang, and Y.~Shi.
		\newblock $k$ critical graphs in $P_5$-free graphs.
		\newblock {\em Theoretical Computer Science}, 864:80--91, 2021.
		
		\bibitem{CGSZ16}
		M.~Chudnovsky, J.~Goedgebeur, O.~Schaudt, and M.~Zhong.
		\newblock Obstructions for three-coloring graphs with one forbidden induced subgraph.
		\newblock In {\em Proceedings of the twenty-seventh annual ACM-SIAM symposium on Discrete algorithms}, pages 1774--1783, 2016.
		
		\bibitem{CRST06}
		M.~Chudnovsky, N.~Robertson, P.~Seymour, and R.~Thomas.
		\newblock The strong perfect graph theorem.
		\newblock {\em Annals of Mathematics}, 164:51--229, 2006.
				
		\bibitem{DHHMMP17}
		H.~S. Dhaliwal, A.~M. Hamel, C.~T. Ho\`{a}ng, F.~Maffray, T.~J.~D. McConnell,
		and S.~A. Panait.
		\newblock On color-critical (${P}_5$, co-${P_5}$)-free graphs.
		\newblock {\em Discrete Appl. Mathematics}, 216:142--148, 2017.
		
		\bibitem{Di51}
		G.~A. Dirac.
		\newblock Note on the colouring of graphs.
		\newblock {\em Mathematische Zeitschrift}, 54:347--353, 1951.
		
		\bibitem{Di52}
		G.~A. Dirac.
		\newblock A property of 4-chromatic graphs and some remarks on critical graphs.
		\newblock {\em J. London. Math. Soc.}, 27:85--92, 1952.
		
		\bibitem{Di52i}
		G.~A. Dirac.
		\newblock Some theorems on abstract graphs.
		\newblock {\em Proc. London. Math. Soc.}, 2:69--81, 1952.
		
		\bibitem{F93}
		J.~L. Fouquet.
		\newblock A decomposition for a class of $(P_5,\overline{P_5})$-free graphs.
		\newblock Discrete Math, 121:75--83, 1993.
		
		\bibitem{GS18}
		J.~Goedgebeur and O.~Schaudt.
		\newblock Exhaustive generation of $k$-critical $\mathcal{H}$-free graphs.
		\newblock {\em J. Graph Theory}, 87:188--207, 2018.
		
		\bibitem{HH17}
		P.~Hell and S.~Huang.
		\newblock Complexity of coloring graphs without paths and cycles.
		\newblock {\em Discrete Appl. Mathematics}, 216:211--232, 2017.
		
		\bibitem{HMRSV15}
		C.~T. Ho\`{a}ng, B.~Moore, D.~Recoskiez, J.~Sawada, and M.~Vatshelle.
		\newblock Constructions of $k$-critical ${P_5}$-free graphs.
		\newblock {\em Discrete Appl. Math.}, 182:91--98, 2015.
		
		\bibitem{HLS19}
		S.~Huang, T.~Li, and Y.~Shi.
		\newblock Critical ($P_6$,banner)-free graphs.
		\newblock {\em Discrete Applied Mathematics}, 258:143-151, 2019.
		
		\bibitem{J02}
		T.~R. Jensen.
		\newblock Dense critical and vertex-critical graphs.
		\newblock Discrete Mathematics, 258:63--84, 2002.
		
		\bibitem{KP17}
		M.~Kami\'nski and A.~Pstrucha.
		\newblock Certifying coloring algorithms for graphs without long induced paths.
		\newblock {\em Discrete Applied Mathematics}, 261:258-267, 2019.
		
		\bibitem{L02}
		J.~J. Lattanzio.
		\newblock A note on a conjecture of Dirac.
		\newblock {\em Discrete Mathematics}, 258:323-330, 2002.
		
		\bibitem{MM12}
		F.~Maffray and G.~Morel.
		\newblock On 3-colorable $P_5$-free graphs.
		\newblock {\em SIAM J. Discrete Math.}, 26:1682--1708, 2012.

	\end{thebibliography}
\end{document}